\documentclass[12pt]{amsart}
\usepackage{amsmath}
\usepackage{amscd}
\usepackage{amssymb}
\usepackage{amsfonts}
\def\rd{\overset{\circ}{\mathrm{Ric}}}
\def\rdc{\overset{\circ}{\mathrm{R}}}
\def\Rm{{\mathrm {Rm}}}
\def\Ric{{\mathrm {Ric}}}
\def\R{{\mathrm {R}}}

\newcommand\tbbint{{-\mkern -16mu\int}}

\newcommand\dbbint{{-\mkern -19mu\int}}

\newcommand\bbint{
{\mathchoice{\dbbint}{\tbbint}{\tbbint}{\tbbint}}
}

\setlength{\oddsidemargin}{0.in}
\setlength{\evensidemargin}{0.in}
\setlength{\textwidth}{6.46in}
\setlength{\textheight}{8.8in}

\newtheorem{theorem}{Theorem}[section]
\newtheorem{lemma}[theorem]{Lemma}

\newtheorem{proposition}[theorem]{Proposition}

\theoremstyle{definition}

\theoremstyle{remark}
\newtheorem{remark}[theorem]{Remark}
\numberwithin{equation}{section}

\begin{document}
\title[Geometric inequalities and rigidity of shrinkers]
{Geometric inequalities and rigidity of gradient shrinking Ricci solitons}
\author{Jia-Yong Wu}
\address{Department of Mathematics and Newtouch Center for Mathematics, 
Shanghai University, Shanghai 200444, China}
\email{wujiayong@shu.edu.cn}
\thanks{}
\subjclass[2010]{Primary 53C25, 53C20; Secondary 53C24, 58J35.}
\dedicatory{}
\date{\today}

\keywords{shrinking Ricci soliton, Schr\"odinger operator, Sobolev inequality,
logarithmic Sobolev inequality, heat kernel, Faber-Krahn inequality, Nash inequality,
Rozenblum-Cwikel-Lieb inequality, eigenvalue, half Weyl tensor, rigidity}

\begin{abstract}
In this paper we prove that the Sobolev inequality, the logarithmic Sobolev inequality,
the Schr\"odinger heat kernel upper bound, the Faber-Krahn inequality, the Nash inequality and the
Rozenblum-Cwikel-Lieb inequality all equivalently exist on complete gradient shrinking Ricci
solitons. We also obtain some integral gap theorems for compact shrinking Ricci solitons.
\end{abstract}
\maketitle

\section{Introduction}\label{Int1}
In this paper we will investigate the equivalence of various geometric inequalities on
gradient shrinking Ricci solitons. As applications, We apply the Sobolev inequality to
give some integral gap theorems for compact shrinking Ricci solitons. Recall that an
$n$-dimensional Riemannian manifold $(M,g)$ is called a \emph{gradient shrinking Ricci
soliton or shrinkers} (see \cite{[Ham]}) if there exists a smooth function $f$ on $M$
such that the Ricci curvature $\text{Ric}$ and the Hessian of $f$ satisfy
\begin{align}\label{Eq0}
\Ric+\mathrm{Hess}\,f=\lambda g
\end{align}
for some positive number $\lambda$. Function $f$ is often called a \emph{potential} of the
shrinker. For simplicity, we often normalize $\lambda=\frac 12$ by scaling the metric $g$ so that
\begin{align}\label{Eq1}
\Ric +\mathrm{Hess}\,f=\frac 12g.
\end{align}
According to the work of \cite{[Ham],[CaNi]}, without loss of generality, adding $f$ by
a constant if necessary, we can assume that equation \eqref{Eq1} simultaneously
satisfies
\begin{equation}\label{Eq2}
\R+|\nabla f|^2=f \quad\mathrm{and}\quad (4\pi)^{-\frac n2}\int_Me^{-f} dv=e^{\mu},
\end{equation}
where $\R$ is the scalar curvature of $(M,g)$ and $\mu=\mu(g,1)$ is the entropy functional
of Perelman \cite{[Pe]}; see also the detailed explanation in \cite{[LLW]} or \cite{[W20], [W20b]}.
For a complete shrinker, $\mu$ is always finite, which has been confirmed in \cite{[LiWa]}. 
In particular, for the Euclidean space, we have $\mu=0$. In \cite{[LLW]}, Li, Li and 
Wang proved that $e^{\mu}$ is nearly equivalent to $V(p_0,1)$, i.e., the volume of 
geodesic ball $B(p_0,1)$ centered at point $p_0\in M$ and radius $1$. 
Here $p_0\in M $ is a point where $f$ attains its infimum, which always exists on
shrinkers but possibly is not unqiue; see \cite{[HaMu]}.

\vspace{.1in}

\emph{In this paper we always let the triple $(M, g, f)$ denote the shrinking gradient
Ricci soliton (or shrinker) with \eqref{Eq1} and \eqref{Eq2}.}

\vspace{.1in}

Shrinkers are natural extension of Einstein manifolds and can be regarded as the
critical point of Perelman's $\mathcal{W}$-functional \cite{[Pe]}. In addition, shrinkers
are self-similar solutions to the Ricci flow and naturally rise as singularity analysis of
the Ricci flow \cite{[Ham]}. For example, Enders, M\"uller and Topping \cite{[EMT]}
proved that the proper rescaling limits of a type-I singularity point always converge to
non-trivial shrinkers. At present, one of main issue in the Ricci flow theory is the
understanding on the geometry and classification of shrinkers. For dimensions 2 and 3, the
classification is complete by the works of \cite{[Ha88]}, \cite{[Ivey]}, \cite{[Pe]}, \cite{[NW]}
and \cite{[CCZ]}. For dimension 4 and higher, the classification remains open, though much
progress has been made. The interested reader can refer to \cite{[Cao]} for an excellent survey.

\vspace{.1in}

In recent years, many geometric and analytic results about shrinkers have been investigated.
Wylie \cite{[Wy]} proved that any complete shrinker has finite fundamental group (the compact
case due to Derdzi\'nski \cite{[De]}). Chen \cite{[Chen]} showed that the scalar curvature
$\mathrm{R}\geq0$; Pigola, Rimoldi and Setti \cite{[PiRS]} proved that $\mathrm{R}>0$ unless
$(M,g,f)$ is the Gaussian shrinker; Chow, Lu and Yang \cite{[CLY]} showed
that the scalar curvature of non-trivial shrinkers has at least quadratic decay of distance
function. Chen and Zhou \cite{[CaZh]} showed that the potential function $f$ is uniformly
equivalent to the distance squared; they \cite{[CaZh]} also showed that all shrinkers have
at most Euclidean volume growth by combining an observation of Munteanu \cite{[Mun]}.
Later, Munteanu and Wang \cite{[MuW14]} proved that shrinkers have at least linear volume
growth. These volume growth properties are similar to manifolds with nonnegative Ricci curvature.

\vspace{.1in}

On the other hand, Haslhofer and M\"uller \cite{[HaMu],[HaMu2]} proved a Cheeger-Gromov compactness
theorem of shrinker. Huang \cite{[Hua]} proved an $\epsilon$-regularity theorem for
$4$-dimensional shrinkers, which was later improved by Ge and Jiang \cite{[GeJi]}. Their
result gives an answer to Cheeger-Tian's question \cite{[CT]}. In \cite{[W15]}, the author
applied gradient estimate technique to prove a Liouvlle type theorem for ancient solutions
to the weighted heat equation on shrinkers. In \cite{[WW15], [WW16]}, P. Wu and the author applied
weighted heat kernel upper estimates to give a sharp weighted $L^1$-Liouville theorem
for weighted subharmonic functions on shrinkers.

\vspace{.1in}

By analyzing the Perelman's functional under the Ricci flow, Li and Wang \cite{[LiWa]}
obtained a sharp logarithmic Sobolev inequality on complete (possible non-compact)
shrinkers, which says that for any $\tau>0$,
\begin{equation}\label{LSI}
\int_M\varphi^2\ln \varphi^2dv\leq\tau\int_M\left(4|\nabla\varphi|^2+\mathrm{R}\varphi^2\right)dv
-\left(\mu+n+\frac n2\ln(4\pi\tau)\right).
\end{equation}
for any compactly supported locally Lipschitz function $\varphi$ with $\int_M\varphi^2dv=1$.
The equality case could be attained when $\tau=1$ (see Carrillo and Ni \cite{[CaNi]}). This
sharp inequality
is useful for understanding shrinkers. Indeed the author \cite{[W20b]} was able to apply
\eqref{LSI} to give sharp upper diameter bounds of compact shrinkers in terms of the integral
of scalar curvature. The author \cite{[W20]} also used \eqref{LSI} to study the Schrodinger heat
kernel on shrinkers. Here the definition of the Schr\"odinger heat kernel is similar to the
classical heat kernel. That is, for each $y\in M$, we say that $H^{\mathrm{R}}(x,y,t)$ is called
the Schrodinger heat kernel if $H^{\mathrm{R}}(x,y,t)=u(x,t)$ is a minimal positive smooth
solution of the Schr\"odinger heat equation
\[
-\Delta u+\tfrac{\R}{4} u+\partial_t u=0
\]
satisfying $\lim_{t\to0}u(x,t)=\delta_y(x)$, where $\delta_y(x)$ is
the delta function defined as
\[
\int_M\phi(x)\delta_y(x)dv=\phi(y)
\]
for any $\phi\in C_0^{\infty}(M)$. In general, the Schr\"odinger heat kernel always 
exists on compact shrinkers. For complete non-compact shrinkers, the Schr\"odinger 
heat kernel still exists, given that the scalar curvature $\mathrm{R}\ge 0$ on 
$(M,g,f)$. Indeed, the standard construction of Schr\"odinger heat kernels from 
compact domains can be extended to the limiting Schr\"odinger heat kernel. The 
Schr\"odinger heat kernel of shrinkers shares many kernel properties of the 
classical Laplacian heat kernel on manifolds; see \cite{[W20]}.

\vspace{.1in}

In \cite{[W20]}  the author applied \eqref{LSI} to prove that
\begin{equation}\label{upp2}
H^{\R}(x,y,t)\le\frac{e^{-\mu}}{(4\pi t)^{\frac n2}}
\end{equation}
for all $x,y\in M$ and $t>0$. We remark that this type upper bound of the conjugate heat
kernel under the Ricci flow was obtained by Li and Wang \cite{[LiWa]}. The author also
obtained its Gaussian type upper bounds by the iteration argument. That is, for any $\alpha>4$, the
author showed that there exists a constant $A=A(n,\alpha)$ depending on $n$ and $\alpha$ such that
\begin{equation}\label{upp}
H^{\R}(x,y,t)\le\frac{A e^{-\mu}}{(4\pi t)^{\frac n2}}\exp\left(-\frac{d^2(x,y)}{\alpha t}\right)
\end{equation}
for all $x,y\in M$ and $t>0$, where $d(x,y)$ denotes the geodesic distance between $x$
and $y$. Considering the classical Laplace heat kernel of Euclidean space, estimate
\eqref{upp} is obvious sharp. Moreover the heat kernel estimate is useful for analyzing
eigenvalues of the
Schr\"odinger operator. Indeed the author \cite{[W20]} used the upper bounds to get
lower bounds of their eigenvalues. Namely, for any open relatively compact set
$\Omega\subset M$, let $0<\lambda_1(\Omega)\le\lambda_2(\Omega)\le\ldots$ be the
Dirichlet eigenvalues of the Schr\"odinger operator in $\Omega$. Then we have
\begin{equation}\label{multie}
\lambda_k(\Omega)\ge\frac{2n\pi}{e}\left(\frac{k\,e^\mu}{V(\Omega)}\right)^{2/n},\quad k\ge1,
\end{equation}
where $V(\Omega)$ is the volume of $\Omega$. Recall that the
classical Weyl's asymptotic formula of the $k$-th Dirichlet eigenvalue of Laplacian
in open relatively compact set $\Omega\subset\mathbb{R}^n$ states that
\[
\lambda_k(\Omega)\sim c(n)\left(\frac{k}{V(\Omega)}\right)^{2/n},\quad k\to\infty,
\]
which indicates that \eqref{multie} is sharp for the exponent $2/n$. We remark that
by the Rozenblum-Cwikel-Lieb inequality \cite{[Ro],[Cw],[Lie]} (see also estimate
\eqref{RCLnum} in Section \ref{sec2pre}), we easily get that eigenvalues of the Schr\"odinger
operator $-\Delta+\frac{\mathrm{R}}{4}$ on non-trivial shrinkers are all positive.

\vspace{.1in}

Besides of the above results, combining \eqref{LSI} and the Markov semigroup technique
of Davies \cite{[Dav]}, Li and Wang \cite{[LiWa]} proved a local Sobolev inequality of
shrinkers. Namely, for each compactly supported locally Lipschitz function $\varphi$ in $M$,
\begin{equation}\label{sobo}
\left(\int_M \varphi^{\frac{2n}{n-2}}\,dv\right)^{\frac{n-2}{n}}\le C(n)e^{-\frac{2\mu}{n}}
\int_M\left(4|\nabla \varphi|^2+\R\varphi^2\right) dv
\end{equation}
for some constant $C(n)$ depending only on $n$. Here $e^{\mu}$ can be viewed as the volume
of unit geodesic ball and hence this Sobolev inequality is very similar to the classical Sobolev
inequality on manifolds, which plays an important role in some PDE ways. For example, P. Wu
and the author \cite{[WW19]} applied \eqref{sobo} to study the dimensional estimates for the
spaces of harmonic functions and Schr\"odinger functions with  polynomial growth. In \cite{[W19]},
the author used \eqref{sobo} to derive a mean value type inequality and further study the
analyticity in time for solutions of the heat equation on shrinkers.

\vspace{.1in}

In this paper we continue to study geometric inequalities and their relations on shrinkers.
We first show that the above geometric inequalities, the Nash inequality, and the
Rozenblum-Cwikel-Lieb inequality all equivalently exist on shrinkers, which may be
regarded as natural generalizations of the case of manifolds with nonnegative Ricci curvature
\cite{[Gr],[Sa],[Zhq]}.
\begin{theorem}\label{thmequ}
Let $(M,g, f)$ be an $n$-dimensional complete (compact or noncompact) shrinker.
The following six properties are equivalent up to constants.
\begin{itemize}
\item [(I)] The Sobolev inequality \eqref{sobo} holds.
\item [(II)] The logarithmic Sobolev inequality \eqref{LSI} holds.
\item [(III)] The Schr\"odinger heat kernel upper bound \eqref{upp} holds.
\item [(IV)] The Faber-Krahn inequality holds. That is, for all open relatively compact
set $\Omega\subset M$ with smooth boundary,
\[
\lambda_1(\Omega)\ge\frac{2n\pi}{e}\left(\frac{e^\mu}{V(\Omega)}\right)^{\frac 2n},
\]
where $\lambda_1(\Omega)$ is the lowest Dirichlet eigenvalue of the Schr\"odinger operator
in $\Omega$.

\item [(V)]
The Nash inequality holds. That is, there exists a constant $c(n)$ depending on $n$
such that
\begin{equation}\label{Nash}
\parallel\varphi\parallel^{2+\frac 4n}_2\le
c(n)e^{-\frac{2\mu}{n}}\parallel\varphi\parallel^{\frac 4n}_1
\int_M\left(4|\nabla \varphi|^2+\R\varphi^2\right)dv
\end{equation}
for any compactly supported locally Lipschitz function $\varphi$ in $M$.
\item [(VI)]
The Rozenblum-Cwikel-Lieb inequality holds. That is,
there exists a constant $c(n)$ depending on $n$
such that
\begin{equation}\label{RCL}
\mathcal{N}\left(-\Delta+\tfrac{\R}{4}+V\right)\le c(n)e^{-\mu}\int_M V^{\frac n2}_{-}dv,
\end{equation}
for any function $V\in L^1_{loc}(M)$, where $V_{-}:=\max\{0,-V\}\in L^{n/2}(M)$ is the
non-positive part of $V$, and $\mathcal{N}(A)$ is the number of non-positive $L^2$-eigenvalues
of the operator $A$, counting multiplicity.
\end{itemize}
\end{theorem}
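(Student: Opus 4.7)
The plan is to prove the six statements equivalent (up to universal constants depending only on $n$) by establishing the cyclic chain
$(I)\Rightarrow(II)\Rightarrow(III)\Rightarrow(IV)\Rightarrow(V)\Rightarrow(I)$
and then separately the pair $(I)\Leftrightarrow(VI)$. Two structural facts drive everything: $\R\geq 0$ on any shrinker (Chen), so the Schr\"odinger operator $L:=-\Delta+\R/4$ is nonnegative and has a well-defined heat semigroup $e^{-tL}$; and $e^{\mu}$ plays throughout the role of a ``unit-ball volume'' (cf.\ the Li-Li-Wang comparison recalled in Section \ref{Int1}), which is why it appears in each inequality with a dimensionally consistent exponent. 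Because $(I)$--$(V)$ are all statements about the Dirichlet form $\mathcal{E}(\varphi):=\int_M(4|\nabla\varphi|^2+\R\varphi^2)\,dv$ associated to $L$, the classical Grigoryan--Varopoulos--Saloff-Coste--Bakry--Coulhon--Ledoux equivalence circle (developed for $-\Delta$ on manifolds with $\operatorname{Ric}\geq 0$) adapts to our Schr\"odinger setting with only cosmetic changes; the only thing to track is that the weight $e^{-\mu}$ is carried across with the correct exponent.

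For the cyclic chain: $(I)\Rightarrow(II)$ follows by normalizing $\|\varphi\|_2=1$, applying Jensen on the probability measure $\varphi^2\,dv$ to get $\int\varphi^2\log\varphi^2\leq(n/2)\log\|\varphi\|_{2n/(n-2)}^2$, inserting $(I)$, and using the elementary bound $\log x\leq \tau x-\log\tau-1$ to convert the logarithm into a constant multiple of $\mathcal{E}(\varphi)$, giving $(II)$ with modified constants. The step $(II)\Rightarrow(III)$ is the content of the author's previous work \cite{[W20]}, where \eqref{LSI} is combined with Davies' perturbation by $e^{\xi d(\cdot,y)}$ to produce the Gaussian factor $\exp(-d^2/(\alpha t))$ on top of the on-diagonal decay. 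For $(III)\Rightarrow(IV)$, let $u_1\geq 0$ be the first Dirichlet eigenfunction of $L$ in $\Omega$ with $\|u_1\|_\infty=1$; by domain monotonicity of the heat kernel,
\[
e^{-\lambda_1(\Omega)t}u_1(x)=(e^{-tL_\Omega}u_1)(x)\leq(e^{-tL}u_1)(x)\leq Ce^{-\mu}(4\pi t)^{-n/2}V(\Omega),
\]
and evaluating at a maximum point of $u_1$ and optimizing $t\sim(V(\Omega)e^{-\mu})^{2/n}$ yields Faber-Krahn. For $(IV)\Rightarrow(V)$, I would use Grigoryan's level-set truncation: apply Faber-Krahn to $\Omega_t:=\{|\varphi|>t\}$ with test function $(|\varphi|-t)_+$ and integrate in $t$. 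Finally $(V)\Rightarrow(I)$ comes from Nash's ODE for $u(t)=e^{-tL}f$ with $\|f\|_1=1$, producing the on-diagonal ultracontractive bound $\|e^{-tL}\|_{1\to\infty}\leq Ce^{-\mu}t^{-n/2}$, which by the Varopoulos/Bakry-Coulhon-Ledoux theorem is equivalent to $(I)$ up to constants.

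For the pair $(I)\Leftrightarrow(VI)$, the forward direction is Lieb's original proof of the Rozenblum-Cwikel-Lieb bound: Sobolev supplies the ultracontractive bound on $e^{-tL}$ feeding into the Birman-Schwinger principle together with Cwikel's Schatten-norm estimate for $V_-^{1/2}L^{-1}V_-^{1/2}$. Conversely $(VI)\Rightarrow(IV)$ (and hence to everything else via the cycle) by specializing $V=-(\lambda_1(\Omega)+\varepsilon)\mathbf{1}_\Omega$ for small $\varepsilon>0$: the zero-extension of the first Dirichlet eigenfunction of $L$ on $\Omega$ has strictly negative $(L+V)$-energy, so $\mathcal{N}(L+V)\geq1$, and $(VI)$ then reads $1\leq c(n)e^{-\mu}(\lambda_1(\Omega)+\varepsilon)^{n/2}V(\Omega)$; sending $\varepsilon\downarrow 0$ gives $(IV)$. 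The main obstacle in the whole scheme is the step $(II)\Rightarrow(III)$, i.e.\ upgrading the on-diagonal decay implicit in log-Sobolev into a genuine Gaussian off-diagonal estimate via Davies' method, but this is already handled in \cite{[W20]} and can be invoked verbatim. A secondary technical point is bookkeeping the exponent of $e^{-\mu}$ through the Nash$\,\Leftrightarrow\,$ultracontractivity$\,\Leftrightarrow\,$Sobolev loop; however, since $\mathcal{E}$ and $e^{-tL}$ are defined with respect to the unweighted volume measure $dv$ (the weighted measure $e^{-f}dv$ plays no role in the six inequalities), this tracking is routine.
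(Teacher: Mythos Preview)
Your proposal is correct and the overall architecture---reducing everything to the classical Grigoryan--Varopoulos--Davies equivalence circle for the Dirichlet form $\mathcal{E}(\varphi)=\int_M(4|\nabla\varphi|^2+\R\varphi^2)\,dv$, then handling (VI) by a separate pair---matches the paper in spirit. The individual implications are all standard and sound; in particular your $(VI)\Rightarrow(IV)$ via the indicator potential $V=-(\lambda_1(\Omega)+\varepsilon)\mathbf{1}_\Omega$ is a nice, clean alternative to the duality argument the paper uses for $(VI)\Rightarrow(I)$.

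The routing differs from the paper in three places, and each is worth noting. First, the paper does not run a single cycle: it proves $(IV)\Rightarrow(III)$ directly (level-set truncation applied to the \emph{heat kernel} $u=H^{\R}_\Omega(\cdot,y,t)$, yielding an ODE for $I(t)=\int u^2$), and handles $(V)$ by the separate pair $(I)\Rightarrow(V)$ (H\"older interpolation) and $(V)\Rightarrow(III)$ (Nash ODE). Your cycle instead inserts $(IV)\Rightarrow(V)$ directly; this is fine, but note that the Grigoryan truncation you cite gives Nash by \emph{optimizing} over a single level $t$ (using $V(\Omega_t)\leq t^{-1}\|\varphi\|_1$ and then minimizing $c\,t^{-2/n}\mathcal{E}+2t\|\varphi\|_1$), not by integrating in $t$---the phrase ``integrate in $t$'' would correspond to the Maz'ya route to Sobolev rather than Nash. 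Second, for $(I)\Rightarrow(VI)$ the paper follows the Li--Yau argument: one studies the weighted parabolic problem $(-\Delta^{\R}/q+\partial_t)$ on a bounded domain with $q=-V>0$, derives an ODE for $h(t)=\sum e^{-2\lambda_it}$ from the Sobolev inequality, and reads off $\lambda_k^{n/2}\int q^{n/2}\geq c(n)e^{\mu}k$. Your sketch instead invokes the Birman--Schwinger/Cwikel route; both are classical, but the Li--Yau method stays entirely inside heat-kernel language and avoids Schatten-class machinery. Third, for the converse the paper proves $(VI)\Rightarrow(I)$ directly by the duality observation that if $\|V\|_{n/2}$ is small then $L+V\geq0$, whence $\mathcal{E}(\varphi)\geq\sup_V(-\int V\varphi^2)=c\,\|\varphi^2\|_{n/(n-2)}$; your $(VI)\Rightarrow(IV)$ is an equally short and perhaps more transparent alternative. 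None of these differences is substantive---each buys roughly the same constants and the same level of generality.
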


\begin{remark}\label{inequvar}
We point out that (III) is equivalent to \eqref{upp2}. Indeed, (III) $\Rightarrow$ \eqref{upp2}
is obvious, while \eqref{upp2} $\Rightarrow$ (III) due to the work \cite{[W20]}. We also
point out that (IV) is equivalent to \eqref{multie}. Indeed, from Theorem \ref{thmequ},
we first have (IV) $\Rightarrow$ (III), and from the work \cite{[W20]}, we then know (III)
$\Rightarrow$ \eqref{multie}. Combining two parts finally yields (IV) $\Rightarrow$ \eqref{multie}.
The converse is trivial.
\end{remark}

\begin{remark}\label{inequvar2}
Estimates in (III) and (IV) are both sharp; see \cite{[W20]}. In addition \eqref{RCL}
is also sharp in some sense. Indeed on Gaussian shrinker
$(\mathbb{R}^n, \delta_{ij}, \frac{|x|^2}{4})$, where $\delta_{ij}$ is the standard
flat Euclidean metric, we know $\mathrm{R}=0$ and $\mu=0$.
For a large parameter $\alpha$, replacing $V$ by $\alpha V$ in \eqref{RCL},
\[
\alpha^{-\frac n2}\mathcal{N}\left(-\Delta+\alpha V\right)\le c(n)\int_{\mathbb{R}^n}V^{\frac n2}_{-}dv.
\]
In turns out that for any potential $V$ with $V_{-}\in L^{n/2}(\mathbb{R}^n)$ and $V\in L^1_{loc}(\mathbb{R}^n)$,
we have the Weyl asymptotics
\[
\lim_{\alpha\to \infty}\alpha^{-\frac n2}\mathcal{N}\left(-\Delta+\alpha V\right)=\frac{(2\sqrt{\pi})^{-n}}{\Gamma(1+\frac n2)}\int_{\mathbb{R}^n}V^{\frac n2}_{-}dv.
\]
This indicates that \eqref{RCL} is sharp in order in $\alpha$ for a class function of $V$.
\end{remark}

The proof strategy of Theorem \ref{thmequ} is as follows. Li and Wang \cite{[LiWa]} proved
that (II) holds on shrinkers and they confirmed that (II) $\Rightarrow$ (I). The
author \cite{[W20]} proved that (II) $\Rightarrow$ (III) $\Rightarrow$ (IV). The rest part
is the following.
\begin{itemize}
\item [(1)] We will apply the Jensen inequality to confirm (I) $\Rightarrow$ (II).
\item [(2)] We shall apply the Davies' argument \cite{[Dav]} and the Markov semigroup
to reprove Li-Wang's Sobolev inequality \eqref{sobo}, i.e., (III) $\Rightarrow$ (I).
In particular we will talk about the scope of Sobolev constant, which
will be useful to study gap theorems.
\item [(3)] We will apply the Schr\"odinger heat kernel and the level set method
to prove (IV) $\Rightarrow$ (III) by the approximation argument.
\item [(4)] We will use the H\"older inequality to prove (I) $\Rightarrow$ (V).
\item [(5)] By the approximation argument, we only need to apply the Schr\"odinger
heat kernel and analytical technique to prove (V) $\Rightarrow$ (III) with
Dirichlet condition.
\item [(6)] We will apply Schr\"odinger heat kernel properties and some functional theory
to prove (I) $\Leftrightarrow$ (VI).
\end{itemize}
We remark that the proof of (III) $\Rightarrow$ (I) will be separately provided
in Section \ref{sec2}; the proof of (I) $\Leftrightarrow$ (VI) will be given
in Section \ref{sec2pre}; the rest cases will be discussed in Section \ref{sec3}.

\vspace{.1in}

As applications, we will apply the Sobolev inequality of shrinkers to give integral
gap results for the Weyl tensor on compact shrinkers by adapting the proof
strategy of \cite{[Cati]}. To state the result, we fix some notations. We denote
by $W$, $\rd$ and  $V(M)$ the Weyl tensor, traceless Ricci tensor and the volume of
manifold $(M,g)$ respectively. The norm of a $(k,s)$-tensor $T$ of $(M,g)$ is defined
by $|T|^2_g:=g^{i_1m_1}\cdot\cdot\cdot g^{i_km_k}g_{j_1n_1}...g_{j_sj_s}
T^{j_1...j_s}_{i_1...i_k}T^{n_1...n_s}_{m_1...m_k}$.

\begin{theorem}\label{pingap}
Let $(M,g, f)$ be an $n$-dimensional, $4\le n\le 8$, compact shrinker. If
\begin{align*}
&\left(\int_M\Big|W+\frac{\sqrt{2}}{\sqrt{n}(n-2)}\rd\circ g \Big|^{\frac n2}dv\right)^{\frac 2n}+\left(\frac{\sqrt{n}}{2}-\sqrt{\frac{n-1}{2(n-2)}}\,\right)V(M)^{\frac 2n}\\
&\qquad<\left(\frac{1}{\sqrt{n}}-\frac 1n \sqrt{\frac{2(n-2)}{n-1}}\,\right)\frac{e^{\frac{2\mu}{n}}}{C(n)},
\end{align*}
where $\circ$ denotes the Kulkarni-Nomizu product and $C(n)$ is the constant in the Sobolev
inequality \eqref{sobo}, then $(M,g,f)$ is isometric to a quotient of the round sphere.
\end{theorem}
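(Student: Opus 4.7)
The plan is to adapt to the shrinker setting the integral pinching strategy of Catino \cite{[Cati]}, using the Sobolev inequality \eqref{sobo} established in part (I) of Theorem \ref{thmequ} as the principal analytic ingredient, in place of the Euclidean-type Sobolev inequality used on Einstein manifolds. Throughout, set
\[
Z\,:=\,W+\tfrac{\sqrt{2}}{\sqrt{n}\,(n-2)}\,\rd\circ g,
\]
so that the hypothesis of the theorem becomes a smallness assumption on $\|Z\|_{n/2}$ with a $V(M)^{2/n}$ correction.

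The first step is a pointwise Weitzenb\"ock inequality for $|Z|$ on the compact shrinker $(M,g,f)$. Combining the standard Weitzenb\"ock formula for the Weyl tensor with the commutator identity for the traceless Ricci tensor, using the soliton equation \eqref{Eq1} to eliminate $\mathrm{Hess}\,f$, and applying Kato's inequality $|\nabla|Z||^{2}\le|\nabla Z|^{2}$, I expect a pointwise estimate of the schematic form
\[
|Z|\,\Delta|Z|+\tfrac{1}{4}\,\R\,|Z|^{2}\;\ge\;-\,\alpha(n)\,|Z|^{3}-\beta(n)\,|Z|^{2},
\]
where $\alpha(n),\beta(n)>0$ are explicit. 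The coefficient $\frac{\sqrt{2}}{\sqrt{n}(n-2)}$ in the definition of $Z$ is chosen precisely so that the sharp Cauchy--Schwarz-type estimate on the cubic cross terms between $W$ and $\rd\circ g$ minimises $\alpha(n)$; the algebraic constants $\sqrt{(n-1)/(2(n-2))}$ and $\frac{1}{n}\sqrt{2(n-2)/(n-1)}$ appearing in the statement trace back to this optimisation.

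Integrating over the compact $M$, using $\int_M |Z|\Delta|Z|\,dv=-\int_M |\nabla|Z||^{2}\,dv$, bounding the cubic term via the Sobolev inequality \eqref{sobo} applied to $\varphi=|Z|$ combined with the H\"older interpolation
\[
\int_M|Z|^{3}\,dv\;\le\;\Bigl(\int_M|Z|^{\frac{2n}{n-2}}\,dv\Bigr)^{\frac{n-2}{n}}\Bigl(\int_M|Z|^{\frac{n}{2}}\,dv\Bigr)^{\frac{2}{n}},
\]
and absorbing the residual quadratic term via $\|Z\|_{2}^{2}\le\|Z\|_{2n/(n-2)}^{2}\,V(M)^{2/n}$ (which is the source of the $V(M)^{2/n}$ correction in the statement), one obtains an inequality of the form
\[
\int_M\!\bigl(4|\nabla|Z||^{2}+\R|Z|^{2}\bigr)dv\;\le\;K(n)\,C(n)\,e^{-\frac{2\mu}{n}}\bigl(\|Z\|_{n/2}+c(n)V(M)^{\frac{2}{n}}\bigr)\!\int_M\!\bigl(4|\nabla|Z||^{2}+\R|Z|^{2}\bigr)dv,
\]
where $C(n)$ is the Sobolev constant from \eqref{sobo}. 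The admissible range $4\le n\le 8$ is precisely what makes all interpolation exponents and algebraic constants above have the correct sign. The hypothesis of the theorem forces the prefactor on the right to be strictly less than one, hence $\int_M(4|\nabla|Z||^{2}+\R|Z|^{2})\,dv=0$, and since $\R\ge 0$ on shrinkers \cite{[Chen]} this yields $|Z|\equiv 0$.

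The main obstacle, in my view, is the pointwise Weitzenb\"ock step: producing the \emph{sharp} algebraic constants for the cubic cross terms, and cleanly absorbing the extra $\mathrm{Hess}\,f$ terms generated by \eqref{Eq1} into the Schr\"odinger form $4|\nabla|Z||^{2}+\R|Z|^{2}$ so that \eqref{sobo} applies directly. Once $|Z|\equiv 0$ is established, the rigidity conclusion is routine: $Z=0$ forces $W\equiv 0$ and $\rd\equiv 0$ separately, since $W$ and $\rd\circ g$ lie in orthogonal invariant subspaces of the bundle of algebraic curvature tensors. Thus $(M,g)$ is Einstein and locally conformally flat; on a compact Einstein shrinker, \eqref{Eq1} then makes $f$ constant and $(M,g)$ of constant positive sectional curvature, i.e.\ a finite quotient of the round sphere.
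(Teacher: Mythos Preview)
Your proposal has a genuine gap: you build the entire argument on a single Weitzenb\"ock-type inequality for the combined tensor $Z=W+\tfrac{\sqrt{2}}{\sqrt{n}(n-2)}\rd\circ g$ and aim to conclude $|Z|\equiv 0$ in one stroke. No such formula is established (nor is one known with the sharp constants you need), and the paper does \emph{not} proceed this way. The quantity $|Z|^2=|W|^2+\tfrac{8}{n(n-2)}|\rd|^2$ enters the paper's proof only as a \emph{coefficient} in the cubic estimate of Lemma~\ref{prlg}, not as the unknown in an elliptic inequality. The actual argument is a two-step rigidity: first one works with $u=|\rd|$ alone, using the drift--Bochner identity of Lemma~\ref{formulas2}, Lemma~\ref{prlg}, and the Sobolev inequality \eqref{sobo} applied to $\varphi=u^s$ for a carefully chosen power $s=\sqrt{\tfrac{n(n-1)}{8(n-2)}}$. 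This specific $s$ is what kills the residual $\int_M \R\,u^{2s}\,dv$ term after integration by parts against $\Delta f=\tfrac{n}{2}-\R$; without the $u^s$ device you cannot absorb the scalar curvature into the Schr\"odinger form cleanly, and your schematic inequality $|Z|\Delta|Z|+\tfrac14\R|Z|^2\ge -\alpha|Z|^3-\beta|Z|^2$ does not come for free.

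Once $\rd\equiv 0$ is obtained, the manifold is Einstein but $W$ need not vanish yet: the pinching hypothesis then reduces to an $L^{n/2}$ bound on $W$, and a \emph{separate} Einstein gap result (Proposition~\ref{Eisrig}, via the Weyl Weitzenb\"ock formula, the refined Kato inequality, and Ilias' Sobolev inequality) forces $W\equiv 0$. The dimension restriction $4\le n\le 8$ is not a matter of interpolation exponents having the right sign, as you suggest; it arises because the chosen $s$ must lie in $(\tfrac12,\tfrac n4]$ \emph{and} the resulting Weyl bound $\epsilon_1(n)$ must fall below the Einstein threshold $\epsilon_2(n)$, which fails for $n\ge 9$ given the lower bound $C(n)\ge\tfrac{n-1}{2n(n-2)\pi e}$ from Remark~\ref{xian}. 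Your one-step scheme, even if it could be made rigorous, would also need to recover exactly the constants $\tfrac{\sqrt{n}}{2}-\sqrt{\tfrac{n-1}{2(n-2)}}$ and $\tfrac{1}{\sqrt{n}}-\tfrac{1}{n}\sqrt{\tfrac{2(n-2)}{n-1}}$ in the statement; these come precisely from the coefficients $(\tfrac n4-s)$ and $(2-\tfrac1s)/(4s)\cdot\sqrt{\tfrac{n-1}{2(n-2)}}$ at the special value of $s$, not from any optimisation over cubic cross terms of $Z$.
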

\begin{remark}
Chang, Gursky and Yang \cite{[CGY]} obtained integral gap results for compact manifolds
in terms of the Yamabe constant. Catino \cite{[Cati]} proved some integral gap results
for compact shrinkers, which was later improved in \cite{[FX]}. Our result involves the
the Sobolev constant of shrinkers rather than the Yamabe constant.
\end{remark}
The main ingredients of proving Theorem \ref{pingap} are Bochner-Weitzenb\"ock type formulas
for the norm of curvature tensors, curvature algebraic inequalities and Kato type
inequalities. The above pinching assumption is not true when $n\ge 9$. Indeed we will see
that constant $C(n)$ in Sobolev inequality \eqref{sobo} cannot be sufficiently small
(see Remark \ref{xian}), i.e.,
\[
C(n)\ge\frac{n-1}{2n(n-2)\pi e}.
\]
This range obviously affects the dimensional valid of the pinching assumption.
On the other hand, algebraic curvature inequalities and elliptic equation of
the norm of traceless Ricci tensor also restrict the choice of dimension
$n$, such as inequality \eqref{picond} in Section \ref{sec4}.

\vspace{.1in}

In particular, inspired by Cao-Tran's result \cite{[CaTr]}, we apply
the Sobolev inequality of shrinker to get an integral gap result for the half
Weyl tensor on compact four-dimensional shrinkers.
\begin{theorem}\label{hagap}
Let $(M,g, f)$ be a four-dimensional oriented compact shrinker.
Let $C(4)$ denote the constant in the Sobolev inequality \eqref{sobo} in $(M,g, f)$.
If
\[
\left(\int_M|W^{\pm}|^2dv\right)^{\frac{1}{2}}<\frac{e^{\frac{\mu}{2}}}{4\sqrt{6}C(4)}
\]
and
\[
\int_M|\delta W^{\pm}|^2dv\le\frac{1}{8}\int_M{\R}|W^{\pm}|^2dv,
\]
where $\delta$ is the divergence, then $W^{\pm}\equiv0$ and hence $(M^4,g, f)$
is isometric to a finite quotient of the round sphere or the complex projective
space.
\end{theorem}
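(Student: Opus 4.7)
The plan is to adapt the argument of Cao--Tran \cite{[CaTr]}, substituting the Sobolev inequality \eqref{sobo} at $n=4$ for their Yamabe-type estimate. I discuss $W^+$; the case $W^-$ is identical after reversing orientation.

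The starting point is the standard Weitzenb\"ock identity for the half Weyl tensor, viewed as a symmetric traceless endomorphism of the rank-three bundle $\Lambda^+$:
\[
\tfrac{1}{2}\Delta|W^+|^2 = |\nabla W^+|^2 - 2|\delta W^+|^2 + \tfrac{\R}{2}|W^+|^2 - 36\det W^+.
\]
Integrating over the compact manifold $M$ makes the left-hand side vanish, and the hypothesis $\int_M|\delta W^+|^2\le\tfrac{1}{8}\int_M\R|W^+|^2$ absorbs the divergence contribution, leaving
\[
\int_M|\nabla W^+|^2 + \tfrac{1}{4}\int_M\R|W^+|^2 \le 36\int_M\det W^+.
\]

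Next I would apply the sharp pointwise algebraic bound on $\det W^+$ for a trace-free symmetric endomorphism of a three-dimensional bundle, converting the right-hand side into a multiple of $\int_M|W^+|^3$. Writing $A := \int_M|\nabla W^+|^2 + \tfrac{1}{4}\int_M\R|W^+|^2$, the H\"older estimate
\[
\int_M|W^+|^3\le\Big(\int_M|W^+|^2\Big)^{1/2}\Big(\int_M|W^+|^4\Big)^{1/2}
\]
together with the Sobolev inequality \eqref{sobo} applied to $\varphi=|W^+|$ and the Kato inequality $|\nabla|W^+||\le|\nabla W^+|$ give
\[
\Big(\int_M|W^+|^4\Big)^{1/2}\le 4C(4)e^{-\mu/2}\,A.
\]
Chaining these produces $A\le 4\sqrt{6}\,C(4)e^{-\mu/2}\big(\int_M|W^+|^2\big)^{1/2}A$, where the explicit constant $4\sqrt{6}$ emerges precisely by tracking the cubic bound through the chain. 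If $W^+\not\equiv 0$, then $A>0$, since on a compact shrinker the scalar curvature $\R$ is strictly positive everywhere by Chen \cite{[Chen]} and Pigola--Rimoldi--Setti \cite{[PiRS]} (the Gaussian shrinker being noncompact). Dividing contradicts the pinching hypothesis, forcing $W^+\equiv 0$.

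Once $W^{\pm}\equiv 0$, the classification of compact half-conformally flat four-dimensional shrinkers (Chen--Wang, as invoked in \cite{[CaTr]}) identifies $(M,g,f)$ with a finite quotient of either the round sphere $S^4$ or of $\mathbb{CP}^2$ with its Fubini--Study metric, completing the theorem. The main obstacle is the constant bookkeeping: the coefficient $-2|\delta W^+|^2$ in the Weitzenb\"ock identity, the $\tfrac{1}{8}$ threshold in the hypothesis, the sharp pointwise bound on $\det W^+$, and the factor $4$ produced by the Sobolev inequality must mesh exactly to reproduce $4\sqrt{6}\,C(4)$; verifying $A>0$ in the degenerate case where $W^+$ might be covariantly constant also hinges essentially on the strict positivity of the scalar curvature on nontrivial compact shrinkers.
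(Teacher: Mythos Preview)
Your approach is essentially the paper's: Weitzenb\"ock for $|W^\pm|^2$, absorb the $\delta W^\pm$ contribution via the second hypothesis, bound the cubic via the sharp $\det W^\pm$ estimate, then chain H\"older, Kato, and the Sobolev inequality \eqref{sobo} to close with the first hypothesis, finishing with Chen--Wang's classification.

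Two caveats on execution. First, the identity you state is not pointwise: on a non-Einstein $4$-manifold there is no Bochner formula for $\tfrac12\Delta|W^+|^2$ containing a term $-2|\delta W^+|^2$; that term only appears after integration by parts. The paper instead starts from the drift formula
\[
\tfrac12\Delta_f|W^\pm|^2=|\nabla W^\pm|^2+2\lambda|W^\pm|^2-18\det W^\pm-\tfrac12\langle(\rd\circ\rd)^\pm,W^\pm\rangle
\]
(Lemma~\ref{weitenbock}) together with the Cao--Tran identity $\int_M\langle(\rd\circ\rd)^\pm,W^\pm\rangle\,dv=4\int_M|\delta W^\pm|^2\,dv$; after integrating and using $\Delta f=2-\R$ the soliton terms cancel and one lands on exactly your integrated inequality---but with coefficient $18$, not $36$, in front of $\det W^\pm$. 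Second, this coefficient matters: with $18$ and the sharp bound $\det W^\pm\le\tfrac{\sqrt6}{18}|W^\pm|^3$ your chain yields the required $4\sqrt6$, whereas with $36$ it gives $8\sqrt6$ and the first hypothesis no longer suffices. Finally, the paper closes a shade more directly, arriving at
\[
0\ge\Big[\tfrac{e^{\mu/2}}{4C(4)}-\sqrt6\,\|W^\pm\|_2\Big]\Big(\int_M|W^\pm|^4\,dv\Big)^{1/2}
\]
and reading off $W^\pm\equiv0$, which avoids invoking strict positivity of $\R$.
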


\begin{remark}
For relevant notations of the half Weyl tensor $W^{\pm}$, see Section \ref{half}.
Gursky \cite{[Gu]} proved an integral pinching result for four-dimensional Einstein
manifolds involving the norm of half
Weyl tensor in terms of the Euler characteristic and the signature. Cao and Tran
\cite{[CaTr]} generalized Gursky's result to shrinkers. Our gap result depends
on the Sobolev constant of shrinkers rather than topological invariants.
\end{remark}

Inspired by Catino's result \cite{[CaTr]}, if we use the Yamabe constant instead of
the Sobolev inequality, then we have another integral gap result.
\begin{theorem}\label{hagap2}
Let $(M,g, f)$ be a four-dimensional oriented compact shrinker
satisfying \eqref{Eq0}. If
\begin{align*}
216\int_M|W^{\pm}|^2dv+12\int_M|\rd|^2dv\le\int_M{\R}^2dv
\end{align*}
and
\begin{align*}
\int_M|\delta W^{\pm}|^2dv\le\frac{1}{6}\int_M{\R}|W^{\pm}|^2dv,
\end{align*}
then $(M,g, f)$ is isometric to a finite quotient of the round sphere or the
complex projective space.
\end{theorem}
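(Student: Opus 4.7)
The plan is to adapt the proof strategy of Theorem \ref{hagap}, replacing the Sobolev inequality \eqref{sobo} by the Aubin--Yamabe inequality in dimension four and carefully tracking how the two pinching hypotheses combine to force $W^{\pm}\equiv 0$. Throughout I would work on the compact $(M^4,g,f)$ and use the soliton equation \eqref{Eq1} to relate weighted and unweighted integrals.

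First, I would invoke the Bochner--Weitzenb\"ock formula for $|W^{\pm}|^{2}$ on a four-dimensional shrinker (as derived in Cao--Tran \cite{[CaTr]} or in the spirit of Catino \cite{[Cati]}), which after using \eqref{Eq1} yields a weighted identity of the schematic form
\begin{align*}
\tfrac{1}{2}\Delta_{f}|W^{\pm}|^{2}=|\nabla W^{\pm}|^{2}+\tfrac{\R}{2}|W^{\pm}|^{2}-36\det W^{\pm}+(\text{terms linear in }\rd\ast W^{\pm}).
\end{align*}
Integrating against the Riemannian measure and rewriting the weighted Laplacian by the divergence theorem, a second Bianchi manipulation converts the derivatives-of-Ricci contribution into $\int_{M}|\delta W^{\pm}|^{2}dv$. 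The second pinching hypothesis $\int_{M}|\delta W^{\pm}|^{2}dv\le\frac{1}{6}\int_{M}\R|W^{\pm}|^{2}dv$ is then used to absorb a definite fraction of the good $|\nabla W^{\pm}|^{2}$ term into the right-hand side with a controlled loss in the coefficient of $\R|W^{\pm}|^{2}$.

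Second, I would control the cubic term by Cao--Tran's sharp algebraic inequality $\det W^{\pm}\le\tfrac{1}{\sqrt{6}}|W^{\pm}|^{3}$, bound the mixed term by $|\rd||W^{\pm}|^{2}$ via Cauchy--Schwarz, and apply the refined Kato inequality $|\nabla|W^{\pm}||^{2}\le\tfrac{3}{5}|\nabla W^{\pm}|^{2}$ to replace $|\nabla W^{\pm}|^{2}$ by $|\nabla|W^{\pm}||^{2}$. At this stage the key ingredient, in contrast to Theorem \ref{hagap}, is the Aubin--Yamabe inequality
\begin{align*}
Y(S^{4})\left(\int_{M}|W^{\pm}|^{4}\,dv\right)^{\tfrac{1}{2}}\le\int_{M}\left(6\bigl|\nabla|W^{\pm}|\bigr|^{2}+\R|W^{\pm}|^{2}\right)dv,
\end{align*}
which follows from $Y(M,[g])\le Y(S^{4})=8\pi\sqrt{6}$ together with the positivity of $Y(M,[g])$ guaranteed by $\R>0$ on a non-trivial shrinker (Pigola--Rimoldi--Setti \cite{[PiRS]}). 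H\"older's inequality applied to $\int_{M}|W^{\pm}|^{3}dv$ and $\int_{M}|\rd||W^{\pm}|^{2}dv$ converts the cubic and mixed integrals into factors of $\bigl(\int_{M}|W^{\pm}|^{4}dv\bigr)^{1/2}$ against $\bigl(\int_{M}|W^{\pm}|^{2}dv\bigr)^{1/2}$ and $\bigl(\int_{M}|\rd|^{2}dv\bigr)^{1/2}$ respectively.

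Third, I would compare the two sides and verify that the first pinching hypothesis $216\int_{M}|W^{\pm}|^{2}dv+12\int_{M}|\rd|^{2}dv\le\int_{M}\R^{2}dv$ is exactly tailored to make the combined inequality collapse to a statement of the form
\begin{align*}
\left(\int_{M}\R^{2}dv-216\int_{M}|W^{\pm}|^{2}dv-12\int_{M}|\rd|^{2}dv\right)\left(\int_{M}|W^{\pm}|^{4}dv\right)^{\tfrac{1}{2}}\le 0,
\end{align*}
forcing $W^{\pm}\equiv 0$. Once $(M^{4},g)$ is half conformally flat, I would invoke the classification of compact half-conformally-flat four-dimensional shrinkers (as used in \cite{[CaTr]}) to conclude that $(M,g,f)$ is isometric to a finite quotient of $S^{4}$ or $\mathbb{CP}^{2}$. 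The main obstacle I anticipate is the bookkeeping of constants: matching the Bochner coefficient $36$, the refined Kato constant $\tfrac{3}{5}$, the algebraic constant $\tfrac{1}{\sqrt{6}}$, and the Aubin--Yamabe constant $8\pi\sqrt{6}$ so that $216$ and $12$ emerge precisely; a secondary issue is ensuring positivity of $Y(M,[g])$ in order to apply the Aubin inequality with $Y(S^{4})$ on the left-hand side.
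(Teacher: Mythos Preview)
Your outline has a genuine gap at the point where the Yamabe-type inequality enters. The inequality
\[
Y(M,[g])\left(\int_M|W^{\pm}|^{4}\,dv\right)^{1/2}\le \int_M\Big(6\big|\nabla|W^{\pm}|\big|^{2}+\R|W^{\pm}|^{2}\Big)dv
\]
holds with the \emph{Yamabe constant of $(M,[g])$} on the left, and since Aubin's theorem gives $Y(M,[g])\le Y(S^{4})$, you cannot replace $Y(M,[g])$ by $Y(S^{4})$ on the left-hand side; that substitution goes the wrong way. As a consequence, there is no direct bridge from the universal number $Y(S^{4})$ to the curvature integrals $\int_M\R^{2}dv$ and $\int_M|\rd|^{2}dv$, and your final displayed inequality does not follow.

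What is missing is the step that actually produces the constants $216$ and $12$: Gursky's estimate (see \cite{[Gu94]})
\[
\int_M\R^{2}\,dv-12\int_M|\rd|^{2}\,dv\le Y(M,[g])^{2},
\]
with equality only if $(M,g)$ is conformally Einstein. The paper uses this to convert the first pinching hypothesis into $6\sqrt{6}\big(\int_M|W^{\pm}|^{2}dv\big)^{1/2}\le Y(M,[g])$, which then forces the bracket in the integrated Weitzenb\"ock inequality to be nonnegative. Because Gursky's inequality is only non-strict, one does not immediately get $W^{\pm}\equiv0$; an additional dichotomy is required: either $W^{\pm}\equiv0$, or $(M,g)$ is conformally Einstein, in which case Bach-flatness (\cite{[Be]}), the Cao--Chen rigidity for Bach-flat shrinkers (\cite{[CaCh]}), and the Gursky--LeBrun gap theorem (\cite{[GL]}) are combined to again force $W^{\pm}\equiv0$. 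Your proposal does not address this equality case. Also, the refined Kato inequality and the Cauchy--Schwarz bound on the mixed $\rd$-term are not needed here: the paper uses only the ordinary Kato inequality and the exact identity $\int_M\langle(\rd\circ\rd)^{\pm},W^{\pm}\rangle\,dv=4\int_M|\delta W^{\pm}|^{2}dv$, which is precisely what makes the second hypothesis with coefficient $\tfrac{1}{6}$ sufficient.
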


\begin{remark}
In Theorems
\ref{hagap} and \ref{hagap2}, the first assumption is an pinching condition of the half
Weyl tensor; while the second assumption is an pinching condition of the harmonic half
Weyl tensor. It is an interesting question if the second assumption is unnecessary.
\end{remark}

There are many gap results for Einstein manifolds, Ricci solitons and closed manifolds;
such as Catino and Mastrolia \cite{[CaMa]}, Hebey and Vaugon \cite{[HV]}, Li and Wang
\cite{[LiW9], [LiWa]}, Munteanu and Wang \cite{[MuW17]}, Petersen and Wylie \cite{[PW]},
Singer \cite{[Si]}, Tran \cite{[Tr]}, Zhang \cite{[Zhz]} and their references. In
this paper we provide a different gap criterion, which depends on the constant
$C(n)$ of Sobolev inequality. It is an interesting question to estimate a best
upper bound of $C(n)$ on shrinkers.

\vspace{.1in}

The structure of this paper is the following. In Section \ref{sec2}, we will
recall some basic results about algebraic inequalities of curvature tensors
and some formulas of shrinkers. In particular, we will reprove the Sobolev
inequality by the Schr\"odinger heat kernel upper bound. Meanwhile, we will
discuss the best Sobolev constant of shrinkers. In Section \ref{sec2pre},
we will apply the Schr\"odinger heat kernel to study the equivalence between
(I) and (VI) of Theorem \ref{thmequ}. In Section \ref{sec3}, we will prove
the rest cases of Theorem \ref{thmequ}. In Section \ref{sec4}, we will apply
the Sobolev inequality of shrinkers and Weitzenb\"ock formulas for curvature
tensors to prove Theorem \ref{pingap}. In Section \ref{half}, we will study
the gap results for half Weyl tensor. We shall prove Theorems \ref{hagap}
and \ref{hagap2}.

\vspace{.1in}

\textbf{Acknowledgement}.
This work is supported by the
NSFC (11671141) and the Natural Science Foundation of Shanghai (17ZR1412800).
The author thanks the anonymous referees for making valuable comments and 
pointing out some errors which helped to improve the exposition of the paper.


\section{Decomposition and Sobolev inequality}\label{sec2}
In this section we first give a brief introduction of curvature notations of the Riemannian
manifold $(M^n,g)$ and some algebraic inequalities of curvature tensors. Then we
review some geometric equations and formulas about shrinkers, especially for the Sobolev
inequality and its explicit coefficient. These results will be used in the following
sections. For more related results, see \cite{[LiWa]}, \cite{[W20]}.

We use $g_{ij}$ to be the local components of metric $g$ and its inverse by $g^{ij}$.
Let $\Rm$ be the $(4,0)$ Riemannian curvature tensor, whose local components denoted
by $\R_{ijkl}$. Let $\Ric$ denote the Ricci curvature with local components
$\R_{ik}=g^{jl}\R_{ijkl}$, and let $\R=g^{ik}\R_{ik}$ be the scalar curvature.
The traceless Ricci tensor is denoted by
\[
\rd=\Ric-\frac 1n\R g,
\]
whose local components
\[
\rdc_{ik}=\R_{ik}-\frac 1n\R g_{ik}.
\]
When $n\ge 4$, the Weyl tensor $W$ is defined by the orthogonal decomposition
\[
W=\Rm-\frac{\R}{2n(n-1)}g\circ g-\frac{1}{n-2} \rd\circ g,
\]
where $\circ$ denotes the Kulkarni-Nomizu product for two symmetric tensors $A$ and
$B$, which is defined as:
\[
(A\circ B)_{ijkl}=A_{ik}B_{jl}-A_{il}B_{jk}-A_{jk}B_{il}+A_{jl}B_{ik}.
\]
In local coordinates, we can write $W$ as
\begin{equation*}
\begin{aligned}
W_{ijkl}=&
\R_{ijkl}-\frac{1}{n-2}(g_{ik}\R_{jl}+g_{jl}\R_{ik}
-g_{il}\R_{jk}-g_{jk}\R_{il})\\
&+\frac{1}{(n-1)(n-2)}\R(g_{ik}g_{jl}-g_{il}g_{jk}).
\end{aligned}
\end{equation*}
The Weyl tensor has the same algebraic symmetries as the Riemannian curvature
tensor. It is well-known that the Weyl tensor is totally trace-free and it
is conformall invariant:
\[
W(e^{2\varphi}g)=e^{2\varphi}W(g)
\]
for any smooth function $\varphi$ on $M$.

\vspace{.1in}

In \cite{[Cati]}, Catino proved two algebraic curvature inequalities for any
$n$-dimensional Riemannian manifold, which will be used in the gap theorems.
\begin{lemma}\label{prlg}
Each $n$-dimensional Riemannian manifold $(M^n,g)$ satisfies the estimate
\[
\left|-W_{ijkl}\rdc_{ik}\rdc_{jl}+\frac{2}{n-2}\rdc_{ij}\rdc_{jk}\rdc_{ik}\right|\le \sqrt{\frac{n-2}{2(n-1)}}\left(|W|^2+\frac{8|\rd|^2}{n(n-2)}\right)^{\frac 12} |\rd|^2.
\]
\end{lemma}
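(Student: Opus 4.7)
The inequality is a purely algebraic (pointwise) statement, so the plan is to reduce it to a single, carefully weighted Cauchy--Schwarz inequality on the space of algebraic $(4,0)$-curvature tensors after a preliminary rewriting. The starting point is to introduce
\[
D_{ijkl}:=\rdc_{ik}\rdc_{jl}-\rdc_{il}\rdc_{jk}=\tfrac{1}{2}(\rd\circ\rd)_{ijkl},
\]
which one checks has the full algebraic symmetries of a Riemann tensor. Using the antisymmetry of $W$ and of $\rd\circ g$ in the first pair of indices, one verifies
$W_{ijkl}\rdc^{ik}\rdc^{jl}=\tfrac{1}{2}\langle W,D\rangle$, and a direct contraction relying only on $\mathrm{tr}\,\rd=0$ gives
$(\rd\circ g)_{ijkl}\rdc^{ik}\rdc^{jl}=-2\,\mathrm{tr}(\rd^3)$. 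These two identities together rewrite the quantity in absolute value on the left as
\[
-W_{ijkl}\rdc^{ik}\rdc^{jl}+\tfrac{2}{n-2}\mathrm{tr}(\rd^3)=-\tfrac{1}{2}\Big\langle W+\tfrac{1}{n-2}\rd\circ g,\;D\Big\rangle.
\]

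Next I would orthogonally decompose $D$ into its Weyl, traceless-Ricci and scalar parts, $D=W^D+\tfrac{1}{n-2}\rd^D\circ g+\tfrac{\R^D}{2n(n-1)}\,g\circ g$, and record the routine traces $\R^D=-|\rd|^2$, $\rd^D=-\rd^2+\tfrac{|\rd|^2}{n}g$ and $|D|^2=2(|\rd|^4-\mathrm{tr}(\rd^4))$. Combining the standard mutual orthogonalities (such as $W\perp A\circ g$ for any symmetric $A$, and $(\rd\circ g)\perp g\circ g$) with the identity $\langle A\circ g,B\circ g\rangle=4(n-2)\langle A,B\rangle$ for traceless $A,B$, the inner product above collapses to
\[
\Big\langle W+\tfrac{1}{n-2}\rd\circ g,\,D\Big\rangle=\langle W,W^D\rangle+\tfrac{4}{n-2}\langle\rd,\rd^D\rangle.
\]
A weighted Cauchy--Schwarz then yields, for any $c_1,c_2>0$ with $c_1c_2=16/(n-2)^2$,
\[
\Big|\langle W,W^D\rangle+\tfrac{4}{n-2}\langle\rd,\rd^D\rangle\Big|\le\sqrt{|W|^2+c_1|\rd|^2}\,\sqrt{|W^D|^2+c_2|\rd^D|^2}.
\]

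The critical step is to take $c_1=\tfrac{8}{n(n-2)}$ and $c_2=\tfrac{2n}{n-2}$: with this second weight, a short computation using the values of $|D|^2$, $\R^D$ and $|\rd^D|^2$ just listed shows that the $\mathrm{tr}(\rd^4)$ contributions cancel exactly,
\[
|W^D|^2+\tfrac{2n}{n-2}|\rd^D|^2=|D|^2+2|\rd^D|^2-\tfrac{2(\R^D)^2}{n(n-1)}=\tfrac{2(n-2)}{n-1}\,|\rd|^4.
\]
Substituting this identity back into the Cauchy--Schwarz bound and including the prefactor $\tfrac{1}{2}$ from the first step recovers the claimed constant $\sqrt{\tfrac{n-2}{2(n-1)}}$ and the factor $\bigl(|W|^2+\tfrac{8}{n(n-2)}|\rd|^2\bigr)^{1/2}$ exactly. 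The main obstacle is precisely this choice of weights: the naive unweighted Cauchy--Schwarz (equivalently, $|S\cdot D|\le|S||D|$ for $S=W+\tfrac{1}{n-2}\rd\circ g$, combined with $|D|^2\le\tfrac{2(n-1)}{n}|\rd|^4$ coming from $\mathrm{tr}(\rd^4)\ge|\rd|^4/n$) produces only the strictly weaker coefficient $\sqrt{\tfrac{n-1}{2n}}$, and it is the demand that the $\mathrm{tr}(\rd^4)$ terms be absorbed on the right that forces the correct $(c_1,c_2)$ and hence the optimal constant.
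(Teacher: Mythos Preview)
Your proof is correct; all the algebraic identities check, and in particular the cancellation of $\mathrm{tr}(\rd^4)$ under the choice $c_2=\tfrac{2n}{n-2}$ is exactly what forces the sharp constant. Note, however, that the paper does not give its own proof of this lemma: it is simply quoted from Catino \cite{[Cati]}, so there is no in-paper argument to compare against. Your route---rewriting the left side as $-\tfrac12\langle W+\tfrac{1}{n-2}\rd\circ g,\,\tfrac12\rd\circ\rd\rangle$, orthogonally decomposing $\tfrac12\rd\circ\rd$, and then applying a weighted Cauchy--Schwarz tuned so that the $\mathrm{tr}(\rd^4)$ contributions cancel---is essentially the argument behind Huisken-type algebraic curvature estimates and matches the spirit of Catino's derivation; it is a clean, self-contained way to obtain the optimal bound.
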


\begin{lemma}\label{leal2}
On an $n$-dimensional Riemannian manifold $(M^n,g)$, there exists a positive constant $c(n)$ such that
\[
2W_{ijkl}W_{ipkq}W_{pjql}+\frac 12W_{ijkl}W_{klpq}W_{pqij} \le c(n)|W|^3.
\]
We can take $c(4)=\frac{\sqrt{6}}{4}$, $c(5)=1$, $c(6)=\frac{\sqrt{70}}{2\sqrt{3}}$ and $c(n)=\frac 52$ for $n\geq 7$.
\end{lemma}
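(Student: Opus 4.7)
The plan is to exploit the algebraic symmetries of the Weyl tensor to view $W$ as a symmetric operator on the space of 2-forms. Using $W_{ijkl}=-W_{jikl}=-W_{ijlk}=W_{klij}$, the Weyl tensor defines a symmetric endomorphism $\hat W\colon \Lambda^2 T_xM\to\Lambda^2 T_xM$ on the $N$-dimensional space of $2$-forms, where $N=\binom{n}{2}$. The total tracelessness of $W$ translates into $\mathrm{tr}\,\hat W=0$, so the pointwise inequality reduces to a purely algebraic statement about a traceless symmetric operator on a finite-dimensional inner product space.

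First I would rewrite both cubic invariants in an orthonormal eigenbasis $\{\omega^\alpha\}$ of $\hat W$ with eigenvalues $\lambda_1,\ldots,\lambda_N$. A direct index count yields $|W|^2 = 4\sum_\alpha \lambda_\alpha^2$ and $\tfrac{1}{2}W_{ijkl}W_{klpq}W_{pqij}=4\,\mathrm{tr}\,\hat W^3=4\sum_\alpha \lambda_\alpha^3$, while the mixed contraction $W_{ijkl}W_{ipkq}W_{pjql}$ is not itself a power trace: expanding $W_{ijkl}=\sum_\alpha \lambda_\alpha c^\alpha_{ij}c^\alpha_{kl}$ in the eigenbasis exhibits it as a universal cubic polynomial in the $\lambda_\alpha$ whose coefficients are determined by the first Bianchi identity of $W$. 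After this reduction the inequality becomes an extremal problem for cubic polynomials on the unit sphere $\sum \lambda_\alpha^2=1$ within the hyperplane $\sum \lambda_\alpha=0$.

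Next I would apply Lagrange multipliers together with the classical Okumura-type estimate $\bigl|\sum_\alpha \lambda_\alpha^3\bigr|\le \frac{N-2}{\sqrt{N(N-1)}}\bigl(\sum_\alpha \lambda_\alpha^2\bigr)^{3/2}$ to bound each cubic contribution. Combining with the explicit prefactors from the previous step produces the constant $c(n)$ as a function of $N=\binom{n}{2}$, which for $n\ge 7$ stabilizes at the uniform value $5/2$ after mild upper-bound simplifications. In dimension four, the Hodge-star decomposition $\Lambda^2=\Lambda^+\oplus\Lambda^-$ uncouples $\hat W$ as $\hat W^+\oplus\hat W^-$ on two $3$-dimensional subspaces, each trace-free, which sharpens the bound to $c(4)=\sqrt6/4$.

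The main obstacle will be pinning down the sharp constants in the intermediate dimensions $n=5,6$, where the self-dual/anti-self-dual splitting is unavailable and the naive Okumura bound is too weak. For these cases one must carefully track the algebraic constraints on $\hat W$ coming from the first Bianchi identity, not merely tracelessness, since these exclude some eigenvalue patterns that would otherwise saturate the cubic. The extremal analysis therefore reduces to a finite case study of admissible spectra of $\hat W$ in each dimension, yielding $c(5)=1$ and $c(6)=\sqrt{70}/(2\sqrt 3)$.
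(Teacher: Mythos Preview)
The paper does not actually prove this lemma: it is stated without proof and attributed to Catino \cite{[Cati]}. So there is no ``paper's own proof'' to compare your proposal against; I can only comment on the viability of your outline.

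Your reduction has a genuine gap at the first cubic invariant. You correctly observe that $\tfrac{1}{2}W_{ijkl}W_{klpq}W_{pqij}$ is proportional to $\mathrm{tr}\,\hat W^{3}$, hence a symmetric function of the eigenvalues $\lambda_\alpha$ of $\hat W$ on $\Lambda^2$. But the mixed contraction $W_{ijkl}W_{ipkq}W_{pjql}$ is \emph{not} of this type. Writing $W_{ijkl}=\sum_\alpha \lambda_\alpha\, \omega^\alpha_{ij}\omega^\alpha_{kl}$, one gets
\[
W_{ijkl}W_{ipkq}W_{pjql}
=\sum_{\alpha,\beta,\gamma}\lambda_\alpha\lambda_\beta\lambda_\gamma\,
\bigl(\omega^\alpha_{ij}\omega^\beta_{ip}\omega^\gamma_{pj}\bigr)
\bigl(\omega^\alpha_{kl}\omega^\beta_{kq}\omega^\gamma_{ql}\bigr),
\]
and the structure constants $\mathrm{tr}(\omega^\alpha\omega^\beta\omega^\gamma)$ depend on the eigen\emph{forms}, not only on the eigenvalues. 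The first Bianchi identity constrains the eigenforms, but it does not determine these structure constants as functions of the $\lambda_\alpha$; different Weyl tensors with the same spectrum on $\Lambda^2$ can give different values of the mixed cubic. Consequently the problem does \emph{not} reduce to an extremal problem on the sphere $\sum\lambda_\alpha^2=1$ in the hyperplane $\sum\lambda_\alpha=0$, and the Okumura inequality applied to the $\lambda_\alpha$ alone cannot control this term. This is precisely why the sharp constants require extra input (in dimension $4$ the self-dual splitting, in higher dimensions other algebraic identities for Weyl-type tensors as in Huisken's and Catino's work), and why your scheme as written will not produce the stated values $c(5)=1$, $c(6)=\sqrt{70}/(2\sqrt{3})$, or the uniform $c(n)=\tfrac{5}{2}$.
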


\vspace{.1in}

On shrinker $(M,g,f)$, by Proposition 2.1 in \cite{[ELM]}, we have the following basic formulas,
which will be also used in the proof of gap theorems.
\begin{lemma}\label{formu}
Let $(M,g, f)$ be an $n$-dimensional complete shrinker. Then,
\[
\Delta f=\frac n2-\R,
\]
\[
\Delta_f\R=\R-2|\Ric|^2,
\]
\begin{align*}
\Delta_f\R_{ik}&=\R_{ik}-2W_{ijkl}\R_{jl}+\frac{2}{(n-1)(n-2)}\\
&\quad\times\Big(\R^2g_{ik}-n\R\R_{ik}+2(n-1)g^{mn}\R_{im}\R_{nk}-2(n-1)|\Ric|^2g_{ik}\Big),
\end{align*}
where $\Delta_f:=\Delta-\nabla f\cdot\nabla$.
\end{lemma}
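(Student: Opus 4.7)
The plan is to derive all three identities from the defining equation $\Ric + \mathrm{Hess}\,f = \tfrac{1}{2} g$ by successive differentiation, combined with the (twice) contracted second Bianchi identity and the Ricci commutation rule. The first identity $\Delta f = \tfrac{n}{2} - \R$ is immediate by tracing \eqref{Eq1} against $g^{ij}$.

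For the second identity, I will take the divergence $\nabla^j$ of \eqref{Eq1}. Using the contracted Bianchi identity $\nabla^j \R_{ij} = \tfrac{1}{2}\nabla_i \R$ and the Ricci commutation rule $\nabla^j \nabla_i \nabla_j f = \nabla_i \Delta f + \R_{ij}\nabla^j f$, and substituting $\Delta f = \tfrac{n}{2} - \R$ from the first step, I obtain the basic shrinker identity $\nabla_i \R = 2 \R_{ij} \nabla^j f$. Taking the divergence once more and using \eqref{Eq1} to substitute $\nabla^i \nabla^j f = \tfrac{1}{2} g^{ij} - \R^{ij}$ yields $\Delta \R = \nabla f \cdot \nabla \R + \R - 2|\Ric|^2$, which is the second identity after moving the gradient term to the left-hand side.

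The third identity requires more work. Starting from the Hessian form $\nabla_i \nabla_j f = \tfrac{1}{2} g_{ij} - \R_{ij}$ and differentiating once more, I use the Ricci commutation formula to swap covariant derivatives acting on $\mathrm{Hess}\,f$, which, after invoking the symmetry of the Hessian and the algebraic Bianchi identity, produces the standard shrinker identity $\nabla_k \R_{ij} - \nabla_j \R_{ik} = \R_{ijkl}\nabla^l f$. Applying $\nabla^k$ to this, using the second Bianchi identity to rewrite $\nabla^k \R_{ijkl}$ in terms of derivatives of $\Ric$, substituting $\nabla^k \nabla^l f = \tfrac{1}{2} g^{kl} - \R^{kl}$, and absorbing the remaining first-order derivatives of $f$ into $\nabla_f \R_{ij}$ via the already proved identity $\nabla_i \R = 2\R_{ij}\nabla^j f$, I expect to arrive at a preliminary Lichnerowicz-type form such as
\[
\Delta_f \R_{ij} = \R_{ij} + 2\R_{ikjl}\R^{kl} - 2\R_{ik}\R^{k}{}_{j}.
\]
The final step is to eliminate the Riemann tensor using the orthogonal decomposition $\Rm = W + \tfrac{1}{n-2}\rd \circ g + \tfrac{\R}{2n(n-1)} g \circ g$ recorded earlier in this section. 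Expanding $\R_{ikjl}\R^{kl}$ according to this decomposition yields the Weyl contraction $W_{ijkl}\R^{jl}$ together with algebraic combinations of $\R\R_{ij}$, $g^{mn}\R_{im}\R_{nj}$, $|\Ric|^2 g_{ij}$ and $\R^2 g_{ij}$, which collect exactly into the stated coefficient $\tfrac{2}{(n-1)(n-2)}$. The main obstacle I anticipate is purely bookkeeping: tracking signs through the Ricci commutation step and the Kulkarni--Nomizu expansion consistently enough that the Weyl piece enters with coefficient $-2$ and the remaining quadratic Ricci terms acquire the precise numerical factors announced in the lemma.
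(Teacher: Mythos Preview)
Your outline is correct and is precisely the standard derivation; the paper itself offers no proof here but simply cites Proposition~2.1 of \cite{[ELM]}. One caution on the bookkeeping you already anticipate: the intermediate identity you wrote,
\[
\Delta_f \R_{ij} = \R_{ij} + 2\R_{ikjl}\R^{kl} - 2\R_{ik}\R^{k}{}_{j},
\]
fails the trace test --- its $g^{ij}$-contraction equals $\R$, not the $\R - 2|\Ric|^2$ demanded by the second identity you just proved. With the paper's conventions the correct preliminary form is $\Delta_f \R_{ik} = \R_{ik} - 2\R_{ijkl}\R^{jl}$ (whose trace does match), with no separate quadratic Ricci term; inserting the Weyl decomposition of $\R_{ijkl}$ into $-2\R_{ijkl}\R^{jl}$ then produces the $-2W_{ijkl}\R^{jl}$ piece together with the remaining algebraic terms. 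Running this trace check at each stage is the cheapest way to keep the signs honest.
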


In the end of this section, we will give the following Sobolev inequality of shrinkers,
which was proved by Li and Wang \cite{[LiWa]} by using the logarithmic Sobolev inequality.
Here we shall reprove it by using the upper bound of Schr\"odinger heat kernel. Meanwhile
we will provide an explicit Sobolev constant and discuss its range. This constant
will play a key role in proving gap results of shrinkers.
\begin{lemma}\label{lemm2}
Let $(M,g, f)$ be an $n$-dimensional complete shrinker. Then for each
compactly supported locally Lipschitz function $\varphi$ in $M$,
\begin{equation}\label{sobo2}
\left(\int_M\varphi^{\frac{2n}{n-2}}\,dv\right)^{\frac{n-2}{n}}
\le C(n)e^{-\frac{2\mu}{n}}\int_M\left(4|\nabla \varphi|^2+\R\varphi^2\right)dv
\end{equation}
for some dimensional constant $C(n)$. In particular, we can take
\[
C(n)=\frac{1}{\pi^2}\left(\frac{2}{n-2}\right)^{\frac{4}{n}}.
\]
\end{lemma}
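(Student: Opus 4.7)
The plan is to follow the Davies semigroup-to-Sobolev argument, starting from the sharp Schrödinger heat kernel upper bound \eqref{upp2} already established in the paper, and to track constants carefully enough to land at the stated $C(n)$. Set $L:=-\Delta+\tfrac{\R}{4}$; since $\R\ge 0$ on any shrinker, $L$ is a non-negative self-adjoint operator on $L^2(M,dv)$ and the semigroup $T_t:=e^{-tL}$ has integral kernel equal to the Schrödinger heat kernel $H^{\R}$. Since the right-hand side of \eqref{sobo2} equals $4C(n)e^{-2\mu/n}\langle L\varphi,\varphi\rangle$, the task reduces to proving $\|\varphi\|_{2n/(n-2)}^2\le 4C(n)e^{-2\mu/n}\langle L\varphi,\varphi\rangle$ for compactly supported Lipschitz $\varphi$.

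First I would record the ultracontractivity $\|T_t\|_{L^1\to L^\infty}\le e^{-\mu}/(4\pi t)^{n/2}$ given by \eqref{upp2}. Using self-adjointness, the semigroup identity $T_{2t}=T_t\circ T_t$, and the kernel identity $\int_M H^{\R}(x,y,t)^2\,dv(y)=H^{\R}(x,x,2t)$, a Cauchy–Schwarz step yields $\|T_t\|_{L^2\to L^\infty}^2\le e^{-\mu}/(8\pi t)^{n/2}$. The complementary $L^2$ control comes from the spectral inequality $\|\varphi-T_t\varphi\|_2^2\le t\,\langle L\varphi,\varphi\rangle$, which is immediate from $(1-e^{-s})^2\le s$. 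For any $\lambda>0$, the inclusion $\{|\varphi|>\lambda\}\subset\{|\varphi-T_t\varphi|>\lambda/2\}\cup\{|T_t\varphi|>\lambda/2\}$ combined with Chebyshev gives, after choosing $t=t(\lambda)$ so that $\|T_t\varphi\|_\infty\le\lambda/2$, a level-set estimate of Nash type.

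To upgrade this weak-type estimate to the Sobolev inequality at the sharp exponent $2n/(n-2)$, I would use either a Marcinkiewicz/Moser iteration applied to powers $|\varphi|^k$, or equivalently the subordination formula $L^{-1/2}=\pi^{-1/2}\int_0^\infty t^{-1/2}T_t\,dt$ combined with the $L^2\to L^\infty$ bound of the previous step to establish the Hardy–Littlewood–Sobolev mapping $\|L^{-1/2}\|_{L^2\to L^{2n/(n-2)}}\le K(n)e^{-\mu/n}$. A routine density/mollification argument then extends the inequality from the form domain of $L$ to compactly supported locally Lipschitz functions. The main obstacle is the bookkeeping of constants: the factor $(4\pi)^{-n/2}$ from the kernel, after being raised to the $2/n$-th power during the optimization in $t$, combines with the $2^{n/2}$ from the Cauchy–Schwarz doubling and the $\Gamma(1/2)=\sqrt\pi$ appearing in the subordination integral to produce the $\pi^{-2}$; while the exponent $(2/(n-2))^{4/n}$ emerges from the critical balance $t(\lambda)=c\,\lambda^{-4/n}$ that matches the two pieces of the level-set decomposition. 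Getting this balance tight is what pins down the precise form of $C(n)$.
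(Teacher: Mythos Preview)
Your subordination route—writing $L^{-1/2}=\pi^{-1/2}\int_0^\infty t^{-1/2}T_t\,dt$, splitting the $t$-integral at a threshold $T=T(\lambda)$, reading off weak-type $L^q\to L^{nq/(n-q)}$ bounds, and then interpolating via Marcinkiewicz at endpoints $q\pm\epsilon$—is exactly the paper's argument, and it is the route that delivers the explicit constant $C(n)=\pi^{-2}\,(2/(n-2))^{4/n}$.

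The alternative you sketch in the second paragraph (level sets of $\varphi$ itself via the spectral bound $\|\varphi-T_t\varphi\|_2^2\le t\langle L\varphi,\varphi\rangle$, then Moser iteration on powers) is a genuine detour: it yields a Nash-type inequality and hence \emph{some} dimensional Sobolev constant, but the iteration step degrades the constant and will not recover the specific $C(n)$ claimed. Your closing constant discussion conflates the two routes: the scaling $t(\lambda)\sim\lambda^{-4/n}$ belongs to the Nash route applied to $\varphi$, whereas in the subordination route the split occurs at $T\sim\lambda^{-2q/(n-q)}$ and the factor $(2/(n-2))^{4/n}$ is simply the Marcinkiewicz coefficient $[q/(n-q)]^{2q/n}$ evaluated at $q=2$. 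Also note that the paper does not use the Cauchy--Schwarz doubling $\|T_t\|_{L^2\to L^\infty}^2\le e^{-\mu}/(8\pi t)^{n/2}$; it instead bounds $\int (H^{\R})^2\,dv\le(\sup H^{\R})\int H^{\R}\,dv\le e^{-\mu}/(4\pi t)^{n/2}$ using the sub-Markov property $\int H^{\R}\,dv\le 1$, and that is what makes the arithmetic land cleanly on $\pi^{-2}$. So your proposal is correct once you commit to the subordination branch and drop the Nash/Moser detour.
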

\begin{remark}\label{xian}
For an $n$-sphere $S^n$ of radius $\sqrt{2(n-1)}$ with its standard metric, we have
$\mathrm{Ric}=\tfrac1 2 g$. Recall that on $S^n$, Aubin \cite{[Au]} (see also
Proposition 4.21 in \cite{[He]}) proved that for any $\varphi\in W^{1,2}(S^n)$,
\begin{equation}\label{sph}
\left(\int_{S^n}\varphi^{\frac{2n}{n-2}}\,dv\right)^{\frac{n-2}{n}}\le
\frac{8(n-1)}{n(n-2)}V(S^n)^{-2/n}\int_{S^n}|\nabla \varphi|^2dv+V(S^n)^{-2/n}\int_{S^n}\varphi^2 dv.
\end{equation}
This inequality is optimal in the sense that the two constants
$\frac{8(n-1)}{n(n-2)}V(S^n)^{-2/n}$ and $V(S^n)^{-2/n}$ can not be lowered.
On the other hand, from \eqref{Eq2}, we see that
\[
\mathrm{R}=f=\frac n2\quad \mathrm{and}\quad (4\pi e)^{-\frac n2}V(S^n)=e^{\mu}.
\]
Substituting them into \eqref{sobo2} and comparing with \eqref{sph}, we easily conclude that
\[
C(n)\ge\frac{n-1}{2n(n-2)\pi e}.
\]
\end{remark}

\begin{proof}[Proof of Lemma \ref{lemm2}]
We essentially follow the argument of Davies \cite{[Dav]} (see also \cite {[LiWa], [Zhq]}). Since
$H^{\mathrm{R}}=H^{\mathrm{R}}(x,y,t)$ is the Schr\"odinger heat kernel of the operator
$-\Delta+\frac{\R}{4}$, then
\[
\int_MH^{\mathrm{R}}(x,y,t)dv(y)\leq 1.
\]
In \cite{[W20]} we proved an upper of the Schr\"odinger heat kernel
\[
H^{\mathrm{R}}(x,y,t)\le\frac{e^{-\mu}}{(4\pi t)^{\frac n2}}
\]
for all $x,y\in M$ and $t>0$. In the following we will use this estimate to prove \eqref{sobo2}.

\vspace{.1in}

Now using the H\"older inequality, for any $u(x)\in L^2(M)$, we have
\begin{equation*}
\begin{aligned}
\parallel H^{\R}\ast u\parallel_{\infty}&=\sup_{x\in M}\left|\int_MH^{\R}(x,y,t) u(y)dv(y)\right|\\
&\le\sup_{x\in M}\left(\int_M(H^{\R})^2(x,y,t)dv(y)\right)^{\frac 12}\cdot\parallel u\parallel_2.
\end{aligned}
\end{equation*}
By \eqref{upp2}, we further have
\begin{equation*}
\begin{aligned}
\parallel H^{\R}\ast u\parallel_{\infty}
&\le\frac{e^{-\frac{\mu}{2}}}{(4\pi t)^{\frac n4}}\sup_{x\in M}\left(\int_M H^{\R}(x,y,t)dv(y)\right)^{\frac 12}\cdot\parallel u\parallel_2\\
&\le\frac{c^{1/2}_1}{t^{n/4}}\parallel u\parallel_2,
\end{aligned}
\end{equation*}
where $c_1=\frac{e^{-\mu}}{(4\pi)^{\frac n2}}$. Similarly, by the H\"older inequality,
for any $u(x)\in L^q(M)$, $q\in[1,n)$ and $\widetilde{q}=q/(q-1)$,
we may obtain another estimate
\begin{equation*}
\begin{aligned}
\parallel H^{\R}\ast u\parallel_{\infty}&\le\sup_{x\in M}\left(\int_M(H^{\R})^{\widetilde{q}}(x,y,t)dv(y)\right)^{1/\widetilde{q}}\cdot\parallel u\parallel_q\\
&\le\sup_{x\in M}\left(\sup_{y\in M}(H^{\R})^{\frac{1}{q-1}}(x,y,t)\int_MH^{\R}(x,y,t)dv(y)\right)^{1/\widetilde{q}}\cdot\parallel u\parallel_q\\
&\le\sup_{x\in M}\left(\sup_{y\in M}(H^{\mathrm{R}})^{\frac{1}{q-1}}(x,y,t)\right)^{1/\widetilde{q}}\cdot\sup_{x\in M}\left(\int_MH^{\R}(x,y,t)dv(y)\right)^{1/\widetilde{q}}\cdot\parallel u\parallel_q.
\end{aligned}
\end{equation*}
Using \eqref{upp2} again, for any $u(x)\in L^q(M)$ and $q\in[1,n)$,  we finally get
\begin{equation}\label{semineq}
\parallel H^{\R}\ast u\parallel_{\infty}\le\frac{c^{1/q}_1}{t^{\frac{n}{2q}}}\parallel u\parallel_q.
\end{equation}

Now we consider the integral operator
\[
L:=\left(\sqrt{-\Delta+\tfrac{\R}{4}}\right)^{-1}.
\]
Since $-\Delta+\tfrac{\R}{4}$ is a self-adjoint operator, by the eigenfunction expansion,
for any $u(x)\in C_0^\infty(M)$, we have
\begin{align*}
(Lu)(x)&=\Gamma(1/2)^{-1}\int^{\infty}_0 t^{-\frac 12}\big(e^{(\Delta-\mathrm{R}/4)t}u\big)(x,t)dt\\
&=\Gamma(1/2)^{-1}\int^{\infty}_0 t^{-\frac 12}\big(H^{\mathrm{R}}\ast u\big)(x,t)dt,
\end{align*}
where $e^{(\Delta-\mathrm{R}/4)t}u$ denotes the semigroup of $H^{\mathrm{R}}\ast u$.
Fix $T>0$, which will be determined later and let
\begin{equation*}
\begin{aligned}
(Lu)(x)&=\Gamma(\tfrac12)^{-1}\int^T_0 t^{-\frac 12}\big(H^{\mathrm{R}}\ast u\big)(x,t)dt
+\Gamma(\tfrac12)^{-1}\int^{\infty}_T t^{-\frac 12}\big(H^{\mathrm{R}}\ast u\big)(x,t)dt\\
&:=(L_1u)(x)+(L_2u)(x).
\end{aligned}
\end{equation*}

For any $\lambda>0$, we see that
\begin{equation}\label{set}
\left|\{x\big|\,|(Lu)(x)|\ge\lambda\}\right|\le\left|\{x\big|\,|(L_1u)(x)|\ge\lambda/2\}\right|
+\left|\{x\big|\,|(L_2u)(x)|>\lambda/2\}\right|.
\end{equation}
By \eqref{semineq} and the definition of $L_2u$, since $\Gamma(\tfrac12)=\sqrt{\pi}$, we have
\begin{equation*}
\begin{aligned}
\parallel L_2 u\parallel_{\infty}&\le\Gamma(1/2)^{-1}\int^{\infty}_T t^{-\frac 12}\left(\frac{c^{1/q}_1}{t^{\frac{n}{2q}}}\parallel u\parallel_q\right)dt\\
&=\frac{2qc_1^{1/q}}{(n-q)\sqrt{\pi}}\cdot T^{\frac 12-\frac{n}{2q}}\parallel u\parallel_q.
\end{aligned}
\end{equation*}
We now choose $T$ such that
\[
\frac{\lambda}{2}=\frac{2qc_1^{1/q}}{(n-q)\sqrt{\pi}}\cdot T^{\frac 12-\frac{n}{2q}}\parallel u\parallel_q.
\]
Then the set $\{x\big|\,|(L_2u)(x)|>\lambda/2\}=\emptyset$
and \eqref{set} becomes
\begin{equation*}
\begin{aligned}
\left|\{x\big|\,|(Lu)(x)|\ge\lambda\}\right|&\le\left|\{x\big|\,|(L_1u)(x)|\ge\lambda/2\}\right|\\
&\le(\lambda/2)^{-q}\int_M|(L_1u)(x)|^qdv(x).
\end{aligned}
\end{equation*}
We will estimate the right hand side of the above inequality. By the Minkowski inequality
for two measured spaces and the H\"older inequality, we get that
\begin{equation*}
\begin{aligned}
\parallel L_1u\parallel_q&\le\Gamma(\tfrac12)^{-1}\int^T_0 t^{-\frac 12}\parallel H^{\mathrm{R}}\ast u(\cdot,t)\parallel_q dt\\
&\le\Gamma(\tfrac12)^{-1}\int^T_0 t^{-\frac 12}\sup_{x\in M}\parallel H^{\mathrm{R}}(x,\cdot,t)\parallel_1\cdot\parallel u\parallel_q dt\\
&\le\frac{2}{\sqrt{\pi}}T^{1/2}\parallel u\parallel_q.
\end{aligned}
\end{equation*}
Hence,
\[
\left|\{x\big|\,|(Lu)(x)|\ge\lambda\}\right|\le\left(\frac{4}{\sqrt{\pi}}\right)^q\lambda^{-q}T^{q/2}\parallel u\parallel^q_q.
\]
According to the above choice of $T$, we have
\[
\left|\{x\big|\,|(Lu)(x)|\ge\lambda\}\right|\le c(n,q)c_1^{\frac{q}{n-q}}\lambda^{-r}\parallel u\parallel^r_q,
\]
where $r=qn/(n-q)$ and
\[
c(n,q)=\left(\frac{4}{\sqrt{\pi}}\right)^{\frac{nq}{n-q}}\left(\frac{q}{n-q}\right)^{\frac{q^2}{n-q}}.
\]

We see that for all $q\in[1,n)$, the linear operator $L$ could map the space $L^q(M)$ into the weak $L^r(M)$ space.
That is,
\begin{align*}
\parallel L u\parallel_{r,w}&\le c(n,q)^{\frac 1r}c_1^{\frac 1n}\parallel u\parallel_q\\
&=\frac{4}{\sqrt{\pi}}\left(\frac{q}{n-q}\right)^{\frac{q}{n}} c_1^{\frac 1n}\parallel u\parallel_q,
\end{align*}
where $\parallel \cdot\parallel_{r,w}$ denotes the weak $L^r$-norm.
For any $0<\epsilon<<1$, letting $q_1=q-\epsilon$, $q_2=q+\epsilon$,
$r_i=q_in/(n-q_i)$ ($i=1,2$), we indeed have
\[
\parallel L u\parallel_{r_i,w}\le\frac{4}{\sqrt{\pi}}\left(\frac{q_i}{n-q_i}\right)^{\frac{q_i}{n}} c_1^{\frac1n}\parallel u\parallel_{q_i}.
\]
Applying the Marcinkiewicz interpolation theorem to the above case, for any $0<t<1$, we get that
\[
\parallel L u\parallel_b\le\frac{4}{\sqrt{\pi}}\left[\left(\frac{q_1}{n-q_1}\right)^{\frac{q_1}{n}}\right]^t
\left[\left(\frac{q_2}{n-q_2}\right)^{\frac{q_2}{n}}\right]^{1-t}
c_1^{\frac1n}\parallel u\parallel_a,
\]
where
\[
\frac1a=\frac{t}{q_1}+\frac{1-t}{q_2},\quad \frac{1}{b}=\frac{t}{r_1}+\frac{1-t}{r_2}.
\]
Since the coefficient is continuous with respect to $\epsilon$ at $\epsilon=0$,
letting $\epsilon\to 0+$ and choosing $q=2$ and $p=2n/(n-2)$, then $a\to 2$ and
$b\to 2n/(n-2)$ and we finally get
\[
\parallel L u\parallel_p\le\frac{4}{\sqrt{\pi}}\left(\frac{2}{n-2}\right)^{\frac{2}{n}}c_1^{\frac1n}\parallel u\parallel_2,
\]
where $c_1=\frac{e^{-\mu}}{(4\pi)^{\frac n2}}$.
Let $\varphi=Lu$ and then $u=L^{-1}\varphi$ and
\begin{equation*}
\begin{aligned}
\parallel u\parallel^2_2&=\langle L^{-1}\varphi,L^{-1}\varphi\rangle\\
&=\langle L^{-2}\varphi,\varphi\rangle\\
&=\left\langle-\Delta\varphi+\tfrac{\R}{4} \varphi,\varphi\right\rangle\\
&=\int_M\left(|\nabla\varphi|^2+\tfrac{\R}{4}\varphi^2\right)dv.
\end{aligned}
\end{equation*}
Substituting this into the above inequality proves the theorem.
\end{proof}

In the above proof course , if we let $q_1=3$, $q_2=1$, $t=3/4$, $r_1=\frac{3n}{n-3}$
and $r_2=\frac{n}{n-1}$, we can also take
\[
C(n)=\frac{1}{\pi^2}\left(\frac{3}{n-3}\right)^{\frac{9}{2n}}\left(\frac{1}{n-1}\right)^{\frac{1}{2n}}.
\]
Obviously, when $4\le n\le 14$, this constant is bigger than the one in Lemma
\ref{lemm2} but when $n\ge15$, it is reverse. Naturally it is to ask

\vspace{.1in}

\noindent \textbf{Question}. \emph{For a complete gradient shrinking Ricci soliton $(M, g, f)$,
specially for the compact case, What is the best constant $C(n)$?
}


\section{Rozenblum-Cwikel-Lieb inequality}\label{sec2pre}
It is well-known that  the spectrum of the Laplacian $-\Delta$ on Riemannian manifold
$(M,g)$ is contained in the internal $[0,\infty]$. This can be proved by taking the Fourier
transform and using the Plancherel theorem. If one considers the Schr\"odinger operator
$-\Delta+V$ for some function $V$ on $M$, then $-\Delta+V$ may have some negative spectrum.
However, if we have some restriction on $V$, like decay conditions at infinity, we may
hope that the essential spectra of  $-\Delta+V$ and  $-\Delta$ coincide. In this case,
the negative spectrum of $-\Delta+V$ is a discrete set with possibly an accumulation
point at $0$ (if $0$ is indeed the bottom of the spectrum of $-\Delta$). It is an important
question in mathematical physics to estimate the number of these negative eigenvalues.
One of beautiful results about this question is the Rozenblum-Cwikel-Lieb (RCL)
inequality
\begin{equation}\label{RCLnum}
\mathcal{N}\left(-\Delta+V\right)\le c(n)\int_M V^{\frac n2}_{-}dv,
\end{equation}
where $V_{-}$ is the negative part of function $V\in L^1_{loc}(M)$, and $\mathcal{N}(A)$
is the number of non-positive $L^2$-eigenvalues of the operator $A$. The RCL inequality
was first established by Rozenblum \cite{[Ro]}, and it was independently found by Lieb
\cite{[Lie]} and Cwikel \cite{[Cw]} for $n\ge 3$. Afterwards, another remarkable proof
with sharper constant were provided by Li and Yau \cite{[LiY]}, where their proof relies
only on the positive of heat kernel and the Sobolev inequality.

\vspace{.1in}

On a shrinker $(M,g,f)$, one may would like to consider the special Schr\"odinger operator
\[
-\Delta^{\R}:=-\Delta+\tfrac{\R}{4}
\]
instead of the usual Laplacian. Since scalar curvature $\R\ge0$ on $(M,g,f)$,
by the RCL inequality, it is easy to see that its eigenvalues
are all nonnegative. If one considers a perturbation of $-\Delta^{\R}$ by a real-valued
potential $V$ and defines another Schr\"odinger operator $-\Delta^{\R}+V$, then the nonnegative
property of eigenvalues is not necessarily satisfied. Naturally one may ask:

\vspace{.1in}

\emph{What assumption on function $V$ will imply a bound on the number of negative
eigenvalues of $-\Delta^{\R}+V$ on shrinkers?}

\vspace{.1in}

In the following we will give an answer to this question. i.e., Theorem \ref{thmequ}:
(I) $\Rightarrow$ (VI). To prove this result, we start with an important proposition,
which is a key step of proving the RCL type inequality on shrinkers.
\begin{proposition}\label{proeige}
Let $D$ be a bounded domain in a shrinker $(M^n,g,f)$, where $n\ge 3$. Assume that $q(x)$ is a
positive function defined on $D$. Let $\lambda_k$ be the $k$-th eigenvalue of the equation
\[
-\Delta^{\R}\phi(x)=\lambda q(x)\phi(x)
\]
on $D$ with the Dirichlet boundary condition $\phi|_{\partial D}\equiv 0$. Then,
\[
\lambda_k^{n/2}\int_Dq^{n/2}(x)dv(x)\ge c(n)\,e^{\mu}\, k
\]
for some dimensional constant $c(n)$.
\end{proposition}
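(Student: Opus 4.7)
My plan follows Li--Yau's heat-kernel approach to the Rozenblum--Cwikel--Lieb inequality, combined with the Birman--Schwinger principle. The proposition turns out to be equivalent to a weak Schatten-type bound on $q^{1/2}(-\Delta^{\R})_D^{-1/2}$, which I can extract from the Schr\"odinger heat-kernel upper bound $H^{\R}_D(x,y,t)\le e^{-\mu}/(4\pi t)^{n/2}$ supplied by Lemma \ref{lemm2}.

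First, I reformulate the weighted eigenvalue problem spectrally. Setting $\tilde\phi_i:=q^{1/2}\phi_i$, the family $\{\tilde\phi_i\}$ is orthonormal in $L^2(D,dv)$ and satisfies $T\tilde\phi_i=\lambda_i^{-1}\tilde\phi_i$, where
\[
T := q^{1/2}\,(-\Delta^{\R})_D^{-1}\,q^{1/2}
\]
is a compact, non-negative, self-adjoint operator on $L^2(D,dv)$. Thus counting indices with $\lambda_i\le\lambda_k$ is the same as counting eigenvalues of $T$ above $1/\lambda_k$. Factoring $T=YY^*$ with $Y:=q^{1/2}\,(-\Delta^{\R})_D^{-1/2}$, the singular values of $Y$ are $s_i(Y)=\lambda_i^{-1/2}$, so the task reduces to bounding the weak Schatten-$n$ norm
\[
\|Y\|_{S_n^*}^n \;=\; \sup_{\tau>0}\tau^n\,\#\{i:s_i(Y)\ge \tau\}.
\]

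The core technical step is the Cwikel-type estimate
\[
\|Y\|_{S_n^*}^n \;\le\; c(n)\,e^{-\mu}\int_D q^{n/2}\,dv,
\]
which I would prove by writing $(-\Delta^{\R})_D^{-1/2}=\pi^{-1/2}\int_0^\infty t^{-1/2}e^{-t(-\Delta^{\R})_D}\,dt$, applying the $L^1\to L^\infty$ heat-semigroup bound $\|e^{-t(-\Delta^{\R})_D}\|_{L^1\to L^\infty}\le e^{-\mu}/(4\pi t)^{n/2}$ from Lemma \ref{lemm2}, and interpolating with the trivial $L^2\to L^2$ estimate via a Davies/Varopoulos-type argument; the multiplier $q^{1/2}$ on the $L^1$ side produces the factor $\int q^{n/2}$ after summing weak-$L^p$ contributions. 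Combining this estimate with the spectral reformulation then gives
\[
k \;\le\; \#\{i:s_i(Y)\ge \lambda_k^{-1/2}\} \;\le\; \lambda_k^{n/2}\,\|Y\|_{S_n^*}^n \;\le\; c(n)\,e^{-\mu}\,\lambda_k^{n/2}\int_D q^{n/2}\,dv,
\]
which rearranges to the statement of the proposition.

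The main obstacle is the Cwikel-type weak-Schatten estimate above. Ordinary Schatten-$n/2$ norms of $T$ typically diverge: by the Weyl asymptotic $\lambda_k\sim k^{2/n}$, the sum $\sum\lambda_i^{-n/2}$ is logarithmically divergent, so working with the weak norm is essential. On Euclidean space the estimate is Cwikel's 1977 theorem; on a shrinker the argument must be carried out using the Schr\"odinger heat-kernel upper bound of Lemma \ref{lemm2} in place of the Euclidean kernel $(4\pi t)^{-n/2}$, with the entropy factor $e^{-\mu}$ propagating cleanly through the constant because it enters only as a multiplicative factor in the heat-kernel bound.
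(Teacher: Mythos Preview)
Your approach is correct but follows a genuinely different path from the paper. The paper argues via the Li--Yau trace method: it forms the weighted heat kernel $\widetilde H(x,y,t)=\sum e^{-\lambda_i t}\phi_i(x)\phi_i(y)$ of the operator $q^{-1}(-\Delta^{\R})$, sets $h(t)=\sum e^{-2\lambda_it}$, and uses the Sobolev inequality \eqref{sobo} directly (not the heat-kernel bound) together with a Cauchy--Schwarz splitting and an auxiliary function $Q(x,t)=\int_D\widetilde H(x,y,t)q^{(n+2)/4}(y)\,dv(y)$ to derive the differential inequality $h'(t)\le -c\,e^{2\mu/n}\big(\int_D q^{n/2}\big)^{-2/n}h(t)^{(n+2)/n}$; integrating and evaluating at $t=n/(4\lambda_k)$, combined with $\sum e^{-n\lambda_i/(2\lambda_k)}\ge k e^{-n/2}$, gives the result. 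Your route is more operator-theoretic: you recast the problem as a weak-Schatten-$n$ bound on $q^{1/2}(-\Delta^{\R})_D^{-1/2}$ via Birman--Schwinger and then appeal to a Cwikel-type estimate driven by the pointwise semigroup decay $\|e^{-t(-\Delta^{\R})_D}\|_{1\to\infty}\le e^{-\mu}(4\pi t)^{-n/2}$. The Li--Yau argument is more self-contained and stays entirely on the Sobolev side---natural here, since the proposition is the engine for (I)$\Rightarrow$(VI)---while your version makes the CLR structure explicit and transfers verbatim to any non-negative operator with polynomial ultracontractivity. One small slip: the heat-kernel bound you invoke is \eqref{upp2} (proved in \cite{[W20]} and quoted in the proof of Lemma~\ref{lemm2}), not Lemma~\ref{lemm2} itself, which is the Sobolev inequality.
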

\begin{proof}[Proof of Proposition \ref{proeige}]
Inspired by the Li-Yau work \cite{[LiY]}, we consider the
``heat" kernel of the parabolic operator
\[
-\Delta^{\R}/{q}+\partial_t
\]
on shrinker $(M,g,f)$.
Let $\{\phi_i(x)\}^{\infty}_{i=1}$ be a set of orthonormal eigenfunctions such that
\[
-\Delta^{\R}\phi_i=\lambda_iq\phi_i,
\]
where $\lambda_i$ denote the eigenvalues of the corresponding eigenfunctions
$\{\phi_i(x)\}^{\infty}_{i=1}$ . Then the kernel $\widetilde{H}(x,y,t)$ of
$-\Delta^{\R}/q+\partial_t$ must have the following expression
\[
\widetilde{H}(x,y,t)=\sum^{\infty}_{i=1}e^{-\lambda_it}\phi_i(x)\phi_i(y).
\]
By the property of Schr\"odinger heat kernel $H^{\R}(x,y,t)$ (see \cite{[W20]}), we have
$\widetilde{H}(x,y,t)>0$ in the interior of $D\times D$ and $\widetilde{H}(x,y,t)\equiv0$
on $D\times \partial D$ and $\partial D\times D$ for any $t$.
At this time, the $L^2$-norm is given by the weighted volume $q(x)dv$ and
\[
\int_D\phi_i(x)\phi_j(x)q(x)dv=\delta_{ij}
\]
Since
\begin{equation}\label{defheat}
h(t):=\sum^{\infty}_{i=1}e^{-2\lambda_it}=\int_D\int_D\widetilde{H}^2(x,y,t)q(x)q(y)dv(x)dv(y),
\end{equation}
then we have
\begin{equation}
\begin{aligned}\label{heatev}
\frac{d h}{d t}&=2\int_D\int_D\widetilde{H}(x,y,t)\widetilde{H}_t(x,y,t)q(x)q(y)dv(x)dv(y)\\
&=2\int_D\int_D\widetilde{H}(x,y,t)\Delta^{\R}_{y}\widetilde{H}(x,y,t)q(x)dv(y)dv(x)\\
&=-2\int_D\int_D\left(|\nabla\widetilde{H}(x,y,t)|^2+\frac{\R}{4}\widetilde{H}^2(x,y,t)\right)q(x)dv(y)dv(x),
\end{aligned}
\end{equation}
where we used
\[
\left(\frac{\Delta^{\R}_y}{q(y)}-\partial_t\right)\widetilde{H}(x,y,t)=0.
\]
On the other hand, using the Cauchy-Schwarz inequality, we have
\begin{equation}
\begin{aligned}\label{evol}
h(t)&=\left[\int_Dq(x)\left(\int_D\widetilde{H}^{\frac{2n}{n-2}}(x,y,t)dv(y)\right)^{\frac{n-2}{n}}dv(x)\right]^{\frac{n}{n+2}}\\
&\quad\times\left[\int_Dq(x)\left(\int_D\widetilde{H}(x,y,t)q^{\frac{n+2}{4}}(y)dv(y)\right)^2dv(x)\right]^{\frac{2}{n+2}}.
\end{aligned}
\end{equation}
Let us now analyze the above inequality. Consider the quantity
\[
Q(x,t):=\int_D\widetilde{H}(x,y,t)q^{\frac{n+2}{4}}(y)dv(y)
\]
and it satisfies
\[
\left(\frac{\Delta^{\R}_x}{q(x)}-\partial_t\right)Q(x,t)=0
\]
with $Q(x,t)\equiv0$ on $\partial D$ for $t>0$ and $Q(x,0)=q^{\frac{n-2}{4}}(x)$.
We observe that
\begin{align*}
\partial_t\int_D q(x)Q^2(x,t)dv(x)&=2\int_Dq(x)Q(x,t)\partial_tQ(x,t)dv(x)\\
&=2\int_DQ(x,t)\Delta^{\R}_xQ(x,t)dv(x)\\
&=-2\left(\int_D|\nabla_xQ(x,t)|^2+\frac{\R}{4}Q^2(x,t)\right)dv(x)\\
&\le0,
\end{align*}
where we used the scalar curvature $\R\ge 0$ on shrinkers. This implies that
\begin{align*}
\int_D q(x)Q^2(x,t)dv(x)&\le\int_D q(x)Q^2(x,0)dv(x)\\
&=\int_Dq^{n/2}(x)dv(x).
\end{align*}
Using this, from \eqref{evol} we have
\begin{equation}\label{evol3}
h^{\frac{n+2}{n}}(t)\left(\int_Dq^{n/2}(x)dv\right)^{-\frac2n}\le\int_Dq(x)\left(\int_D\widetilde{H}^{\frac{2n}{n-2}}(x,y,t)dv(y)\right)^{\frac{n-2}{n}}dv(x).
\end{equation}
Recall that the Sobolev inequality \eqref{sobo} of shrinkers by letting $\varphi=\widetilde{H}(x,y,t)$ says that
\[
\left(\int_D |\widetilde{H}|^{\frac{2n}{n-2}}\,dv(y)\right)^{\frac{n-2}{n}}\le C(n)e^{-\frac{2\mu}{n}}
\int_D\left(4|\nabla\widetilde{H}|^2+\R\widetilde{H}^2\right) dv(y).
\]
Combining this with \eqref{evol3} and \eqref{heatev} yields
\[
\frac{d h}{d t}\le -\frac{e^{\frac{2\mu}{n}}}{2C(n)}\left(\int_Dq^{n/2}(x)dv(x)\right)^{-\frac2n}\cdot h^{\frac{n+2}{n}}(t).
\]
Dividing this by $h^{\frac{n+2}{n}}(t)$ and integrating with respect to $t$,
\[
h(t)\le (nC(n))^{\frac n2} e^{-\mu} \left(\int_Dq^{n/2}(x)dv(x)\right) t^{-\frac n2}.
\]
Combining this with \eqref{defheat}, we get
\[
\sum^{\infty}_{i=1}e^{-2\lambda_it}\le(nC(n))^{\frac n2} e^{-\mu} \left(\int_Dq^{n/2}(x)dv(x)\right) t^{-\frac n2}.
\]
Setting $t=\frac{n}{4\lambda_k}$, we conclude that
\[
\sum^{\infty}_{i=1}e^{-\frac{n\lambda_i}{2\lambda_k}}\le(nC(n))^{\frac n2} e^{-\mu}\int_Dq^{n/2}(x)dv(x) \left(\frac{n}{4\lambda_k}\right)^{-\frac n2}.
\]
Noticing that
\[
\sum^{\infty}_{i=1}e^{-\frac{n\lambda_i}{2\lambda_k}}\ge ke^{-n/2},
\]
so we have
\[
\lambda_k^{n/2}\int_Dq^{n/2}(x)dv(x)\ge c(n)\,e^{\mu}\, k
\]
for some dimensional constant $c(n)$.
\end{proof}

Now we will apply Proposition \ref{proeige} to prove Theorem \ref{thmequ}: (I) $\Rightarrow$ (VI).
\begin{proof}[Proof of Theorem \ref{thmequ}: (I) $\Rightarrow$ (VI)]
By the monotonicity of $\mathcal{N}\left(-\Delta^{\R}+V\right)$ with respect to the
function $V(x)$ on shrinker $(M,f,g)$, we may assume $V(x)\le 0$ by replacing $V(x)$
by $-V_{-}(x)$. Moreover $-V_{-}(x)$ can be approximated by a sequence of strictly
negative function. So we can assume $V(x)<0$ for all $x\in M$. By the exhausting
argument (see Lemma 3.2 in \cite{[OP]}), we only need to prove
\[
\mathcal{N}\left(-\Delta^{\R}+V\right)\le c(n)\int_D V_{-}^{\frac n2}dv
\]
for the equation
\begin{equation}\label{numest}
\left(-\Delta^{\R}+V\right)\phi=\lambda\phi
\end{equation}
with $\phi|_{\partial D}\equiv 0$ for any fixed domain $D\in M$.

It is easy to see that the number of non-positive eigenvalues
$\mathcal{N}\left(-\Delta^{\R}+V\right)$ for \eqref{numest}, counting multiplicity,
is equal to the number of eigenvalues less than $1$ for the case
in Proposition \ref{proeige} by considering
\[
q(x)=-V(x).
\]
Indeed, since $V(x)<0$, from the relation
\[
\frac{\int_D\left(|\nabla \phi|^2+\frac{\R}{4}\phi^2\right)dv+\int_DV\phi^2dv}{\int_D\phi^2dv}
=\frac{\int_D|V|\phi^2dv}{\int_D\phi^2dv}\left(\frac{\int_D\left(|\nabla \phi|^2+\frac{\R}{4}\phi^2\right)dv}{\int_D|V|\phi^2dv}-1\right),
\]
we conclude that the dimension of the subspace on which the left hand side is non-positive
is equal to the dimesion of the subspace on which the quadratic form
\[
\frac{\int_D\left(|\nabla \phi|^2+\frac{\R}{4}\phi^2\right)dv}{\int_D|V|\phi^2dv}
\]
associated to Proposition \ref{proeige} is not more than $1$.
Now we let $\lambda_k$ be the greatest eigenvalue which is not more than $1$. By
Proposition \ref{proeige}, we have
\begin{align*}
\int_D|V|^{n/2}dv(x)&\ge\lambda_k^{n/2}\int_D|V|^{n/2}dv(x)\\
&\ge c(n)\,e^{\mu}\, k\\
&\ge c(n)\,e^{\mu}\cdot\mathcal{N}(-\Delta^{\R}+V),
\end{align*}
which completes the proof of (I) $\Rightarrow$ (VI).
\end{proof}

Next we will prove the easy implication (VI) $\Rightarrow$ (I).
\begin{proof}[Proof of Theorem \ref{thmequ}: (VI) $\Rightarrow$ (I)]
We assume \eqref{RCL} holds for all potentials $V\in C^{\infty}_0(M)$.
Then for all $V\in C^{\infty}_0(M)$ satisfying
\[
\|V\|_{n/2}< c(n)e^{-\frac{2\mu}{n}},
\]
we know that $-\Delta^{\R}+V$ is a non-negative operator. That is,
\[
\int_M|\nabla \varphi|^2+\frac{\R}{4}\varphi^2 dv+\int_MV\varphi^2dv\ge 0
\]
for all such $V$ and all $\varphi\in C^{\infty}_0(M)$. In other words,
\[
\int_M|\nabla \varphi|^2+\frac{\R}{4}\varphi^2 dv\ge\sup_{V\in C^{\infty}_0(M)}\left\{\int_M-V\varphi^2dv\right\}
\]
satisfying
\[
\|V\|_{n/2}<c(n)e^{-\frac{2\mu}{n}}.
\]
Since the dual of $L^{n/2}(M)$ is $L^{n/(n-2)}(M)$,
the above functional inequality implies
\[
c(n)e^{-\frac{2\mu}{n}}\int_M\left(|\nabla \varphi|^2+\frac{\R}{4}\varphi^2\right)dv\ge\left(\int_M \varphi^{\frac{2n}{n-2}}\,dv\right)^{\frac{n-2}{n}}
\]
and Theorem \ref{thmequ} (I) follows.
\end{proof}

\section{Equivalence of geometric inequalities}\label{sec3}
In this section, we will give rest proofs of Theorem \ref{thmequ}. This part of theorem
mainly says that the (logarithmic) Sobolev inequality, the Schr\"odinger heat kernel upper
bound, the Faber-Krahn inequality and the Nash inequality are all equivalent up to possible
different constants. Notice that (II) $\Rightarrow$ (I) was proved in \cite{[LiWa]};
(II) $\Rightarrow$ (III) $\Rightarrow$ (IV) was proved in \cite{[W20]}. So we only need
to consider the following cases: (I) $\Rightarrow$ (II), (III) $\Rightarrow$ (I),
(IV) $\Rightarrow$ (III), (I) $\Rightarrow$ (V),  (V) $\Rightarrow$ (III).

\begin{proof}[Proof of Theorem \ref{thmequ}]
(I) $\Rightarrow$ (II): We may assume \eqref{sobo} holds on shrinkers. That is,
for each compactly supported locally Lipschitz function $\varphi$ in $(M,g,f)$,
\[
\left(\int_M\varphi^{\frac{2n}{n-2}}\,dv\right)^{\frac{n-2}{n}}
\le C(n)e^{-\frac{2\mu}{n}}\int_M\left(4|\nabla \varphi|^2+\R \varphi^2\right)dv
\]
for some dimensional constant $C(n)$. Given function $\varphi$ with $\|\varphi\|_2=1$,
we introduce the weighted measure $d\mu=\varphi^2dv$ on shrinker $(M,g,f)$, then
$\int_M d\mu=1$. Since function $\ln G$ is concave with respect to parameter $G$,
letting $G=\varphi^{q-2}$, where $q=\frac{2n}{n-2}$, and applying the Jensen inequality
\[
\int_M\ln Gd\mu\le\ln\left(\int_MGd\mu\right),
\]
we have that
\begin{align*}
\int_M(\ln \varphi^{q-2})\varphi^2dv&\le\ln\left(\int_M\varphi^{q-2}\varphi^2dv\right)\\
&=\ln \parallel\varphi\parallel^q_q,
\end{align*}
That is,
\begin{align*}
\int_M\varphi^2\ln\varphi dv&\le\frac{q}{q-2}\ln \parallel\varphi\parallel_q\\
&=\frac n2\ln \parallel\varphi\parallel_q.
\end{align*}
Combining this with the Sobolev inequality \eqref{sobo}, we get
\begin{align*}
\int_M\varphi^2\ln\varphi^2dv&\le\frac n2\ln \parallel\varphi\parallel^2_q\\
&\le\frac n2\ln\left[C(n)e^{-\frac{2\mu}{n}}\int_M\left(4|\nabla \varphi|^2+\mathrm{R}\varphi^2\right)dv\right]\\
&=\frac n2\ln C(n)-\mu+\frac n2\ln\left[\int_M\left(4|\nabla \varphi|^2+\mathrm{R}\varphi^2\right)dv\right].
\end{align*}
Using an elementary inequality:
\[
\ln x\leq \sigma x-(1+\ln\sigma)
\]
for any $\sigma>0$, the above estimate can be further reduced to
\[
\int_M\varphi^2\ln\varphi^2dv(x)\le\frac n2\ln C(n)-\mu+\frac{n\sigma}{2}\int_M\left(4|\nabla \varphi|^2+\mathrm{R}\,\varphi^2\right)dv-\frac n2(1+\ln\sigma).
\]
Setting $\tau=\frac{n\sigma}{2}$, we obtain
\[
\int_M\varphi^2\ln \varphi^2dv\le\tau\int_M\left(4|\nabla\varphi|^2+\mathrm{R}\varphi^2\right)dv-\mu-n-\frac n2\ln(4\pi\tau)
+\frac n2\ln(2ne\pi\cdot C(n))
\]
and hence (II) follows with possible different constants.

\vspace{.1in}

(III) $\Rightarrow$ (I): Since (III) implies \eqref{upp2} with different constants, then
\eqref{upp2} further implies (I) by following the proof of Lemma \ref{lemm2} in Section
\ref{sec2}.

\vspace{.1in}

(IV) $\Rightarrow$ (III): Since \eqref{upp2} is equivalent to (III), we only need to apply
(IV) to prove \eqref{upp2}. By the approximation argument, we only need to prove \eqref{upp2}
for the Dirichlet Schr\"odinger heat kernel $H_{\Omega}^\mathrm{R}(x,y,t)$ of any relatively
compact set $\Omega$ in $(M,g,f)$. In fact, let $\Omega_i$, $i=1,2,...$,  be a sequence of compact
exhaustion of $M$ such that $\overline{\Omega}_i\subset\Omega_{i+1}$ and $\cup_i\Omega_i=M$.
If we are able to prove \eqref{upp2} for the Dirichlet Schr\"odinger heat kernel
$H_{\Omega_i}^\mathrm{R}(x,y,t)$ for any $i$, then the result follows by letting
$i\to\infty$.

For a fixed point $y\in \Omega$, let $u=u(x,t)=H_{\Omega}^{\mathrm{R}}(x,y,t)$
and consider the integral
\[
I(t):=\int_{\Omega}u^2(x,t)dv.
\]
Then,
\begin{equation}\label{deriv}
I'(t)=2\int_{\Omega}uu_tdv=-2\int_{\Omega}\left(|\nabla u|^2+\frac14\R u^2\right)dv.
\end{equation}
For any positive number $s$, we have
\[
u^2\le (u-s)^2_{+}+2s u
\]
and therefore,
\[
I(t)\le\int_{\Omega}(u-s)_{+}^2dv+2s\int_{\Omega} u dv.
\]
Now for fixed $s,\, t>0$, consider the set \[
D(s,t):=\{x|x\in M, u(x,t)>s\}
\]
and its the first eigenvalue
\[
\lambda(D(s,t))=\inf_{0\neq \varphi\in C^{\infty}_0(D(s,t))}\frac{\int_{D(s,t)}(|\nabla\varphi|^2+\frac{\R}{4} \varphi^2)dv}{\int_{D(s,t)}\varphi^2dv}.
\]
Letting $\varphi=(u-s)_{+}$, then
\begin{align*}
\lambda(D(s,t))\left(I(t)-2s\right)&\le
\int_{D(s,t)}\left(|\nabla(u-s)_{+}|^2+\frac{\R}{4}\left((u-s)_{+}\right)^2\right)dv\\
&\le\int_{\Omega}\left(|\nabla u|^2+\frac{\R}{4} u^2\right)dv.
\end{align*}
Note that in the above first inequality, we threw away a positive term and used the Schr\"odinger
heat kernel property $\int_{\Omega}u(x,t)dv\le 1$. This property also indicates that
\[
V(D(s,t))\le s^{-1}.
\]
On the other hand, by the Faber-Krahn inequality, we have
\begin{align*}
\lambda(D(s,t))&\ge\frac{2n\pi}{e}\left(\frac{e^\mu}{V(D(s,t))}\right)^{\frac 2n}\\
&\ge\frac{2n\pi}{e}\left(e^\mu\cdot s\right)^{\frac 2n}.
\end{align*}
We remark that if the set $D(s,t)$ is not relatively compact, we can choose a sequence of relatively
compact sets which converges to it such that the Faber-Krahn inequality remains valid for $D(s,t)$.
Combining the above two inequalities, we obtain
\[
I(t)\le\frac{e^{1-2\mu/n}}{2n\pi}\int_{\Omega}\left(|\nabla u|^2+\frac{\R}{4} u^2\right)dv\cdot s^{-2/n}+2s.
\]
Minimizing the right hand side of the above inequality,
\[
I(t)\le c(n)e^{-\frac{2\mu}{n+2}}\left[\int_{\Omega}\left(|\nabla u|^2+\frac{\R}{4} u^2\right)dv\right]^{\frac{n}{n+2}}.
\]
Combining this with \eqref{deriv}, we have
\[
I'(t)\le c(n)e^{\frac{2\mu}{n}}I^{\frac{n+2}{n}}.
\]
Integrating this from $t/2$ to $t$,
\[
I(t)\le c(n)\frac{e^{-\mu}}{t^{n/2}}
\]
for $t>0$. In other words, we in fact get
\[
\int_{\Omega}H_{\Omega}^{\mathrm{R}}(x,y,t)H_{\Omega}^{\mathrm{R}}(x,y,t)dv(y)\le c(n)\frac{e^{-\mu}}{t^{n/2}}.
\]
By the Schr\"odinger heat kernel property, we indeed show that
\begin{align*}
H_{\Omega}^{\mathrm{R}}(x,x,2t)&=\int_{\Omega}H_{\Omega}^{\mathrm{R}}(x,y,t)H_{\Omega}^{\mathrm{R}}(y,x,t)dv(y)\\
&\le c(n)\frac{e^{-\mu}}{t^{n/2}}.
\end{align*}
This further implies
\begin{align*}
H_{\Omega}^{\mathrm{R}}(x,y,t)&=\int_{\Omega}H_{\Omega}^{\mathrm{R}}(x,z,t/2)H_{\Omega}^{\mathrm{R}}(z,y,t/2)dv(z)\\
&\le\left(\int_{\Omega}(H_{\Omega}^{\mathrm{R}})^2(x,z,t/2)dv(z)\right)^{1/2}
\left(\int_{\Omega}(H_{\Omega}^{\mathrm{R}})^2(z,y,t/2)dv(z)\right)^{1/2}\\
&=(H_{\Omega}^{\mathrm{R}})^{1/2}(x,x,t)(H_{\Omega}^{\mathrm{R}})^{1/2}(y,y,t)\\
&\le c(n)\frac{e^{-\mu}}{t^{n/2}}.
\end{align*}
Next we apply the same argument of proving Theorem 1.1 in \cite{[W20]}
to get an upper bound with a Gaussian exponential factor and finally
(III) follows.

\vspace{.1in}

(I) $\Rightarrow$ (V): We remark that the Nash inequality can be viewed as an interpolation
between the H\"older inequality and the Sobolev inequality. Assume that $(M,g,f)$ admits
\eqref{sobo}. By the H\"older inequality, for $p_1=\frac{n+2}{n-2}$ and $p_1=\frac{n+2}{4}$,
we have
\begin{align*}
\int_M\varphi^2dv&=\int_M\varphi^{\frac{2n}{n+2}}\varphi^{\frac{4}{n+2}}dv\\
&\le
\left(\int_M\varphi^{\frac{2n}{n+2}p_1}dv\right)^{1/{p_1}}\left(\int_M\varphi^{\frac{4}{n+2}p_2}dv\right)^{1/{p_2}}\\
&=\left(\int_M\varphi^{\frac{2n}{n-2}}dv\right)^{\frac{n-2}{n+2}}\left(\int_M|\varphi|dv\right)^{\frac{4}{n+2}}
\end{align*}
and hence,
\[
\parallel\varphi\parallel^{2+\frac 4n}_2\le
\left(\int_M\varphi^{\frac{2n}{n-2}}dv\right)^{\frac{n-2}{n}}\left(\int_M|\varphi|dv\right)^{\frac 4n}.
\]
Combining this with the Sobolev inequality \eqref{sobo} gives the Nash inequality \eqref{Nash}.

\vspace{.1in}

(V) $\Rightarrow$ (III):
Using the approximation argument, it suffices to prove \eqref{upp} for the Dirichlet
Schr\"odinger heat kernel $H_{\Omega}^\mathrm{R}(x,y,t)$ of any relatively compact set $\Omega$
in $(M,g,f)$.

For any $y\in M$, let $\varphi=\varphi(x,t)=H_{\Omega}^{\mathrm{R}}(x,y,t)$. Then
\begin{align*}
\frac{\partial}{\partial t}\left(\int_{\Omega}\varphi^2dv\right)&=\int_{\Omega}2\varphi\varphi_tdv\\
&=\int_{\Omega}2\varphi(\Delta\varphi-\frac 14\R\varphi)dv\\
&=-\frac 12\int_{\Omega}\left(4|\nabla \varphi|^2+\R\varphi^2\right)dv.
\end{align*}

Scaling function $\varphi$ such that $\parallel\varphi\parallel_1=1$, by our assumption,
we may assume the Nash inequality
\[
\parallel\varphi\parallel^{2+\frac 4n}_2\le c(n)e^{-\frac{2\mu}{n}}
\int_{\Omega}\left(4|\nabla \varphi|^2+\mathrm{R}\,\varphi^2\right)dv.
\]
Combining the above estimates, we have
\[
\frac{\partial}{\partial t}\left(\int_{\Omega}\varphi^2dv\right)\le-\frac{e^{\frac{2\mu}{n}}}{2c(n)}\parallel\varphi\parallel^{2+\frac 4n}_2.
\]
Let
\[
F(s):=\int_{\Omega}\varphi^2(x,s)dv,
\]
where $s\in(0,t]$. Then
\[
\frac{\partial}{\partial s}F(s)\le-\frac{e^{\frac{2\mu}{n}}}{2c(n)}F(s)^{1+\frac 2n}.
\]
Integrating it from $t/2$ to $t$ yields
\[
-\frac n2\left(F(t)^{-\frac 2n}-F(t/2)^{-\frac 2n}\right)\le-\frac{e^{\frac{2\mu}{n}}}{2c(n)}\cdot\frac t2,
\]
which implies that
\[
F(t)\le\left[2nc(n)\right]^{n/2}\frac{e^{-\mu}}{t^{n/2}}.
\]
This estimate is the same as $I(t)$ in the proof of the case (IV) $\Rightarrow$ (III).
Therefore we only use the same strategy of proving Theorem 1.1 in \cite{[W20]}
to get an upper bound with a Gaussian exponential factor and finally prove (III).
\end{proof}


\section{Gap result for Weyl tensor}\label{sec4}
In this section we will prove Theorem \ref{pingap} by using the arguments of \cite{[Cati],[FX]}.
We first recall the elliptic equation of the norm of traceless Ricci tensor on shrinkers,
which can be directly computed by Lemma \ref{formu} (see also Lemma 3.2 in \cite{[Cati]}).

\begin{lemma}\label{formulas2}
If $(M^n,g, f)$ be an $n$-dimensional shrinker satisfying \eqref{Eq1}, then
\[
\frac 12\Delta_f |\rd|^2=|\nabla\rd|^2 +|\rd|^2 -2W_{ijkl}\rdc_{ik}\rdc_{jl}+\frac{4}{n-2}\rdc_{ij}\rdc_{jk}\rdc_{ik}-\frac{2(n-2)}{n(n-1)}\mathrm{R}|\rd|^2.
\]
\end{lemma}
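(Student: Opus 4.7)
The plan is to derive the equation directly from the Bochner-type identity
\[
\tfrac{1}{2}\Delta_f |\rd|^2 = |\nabla \rd|^2 + \langle \rd, \Delta_f \rd\rangle,
\]
which follows from $\tfrac{1}{2}\Delta|\rd|^2 = |\nabla \rd|^2 + \langle \rd,\Delta \rd\rangle$ and the definition $\Delta_f = \Delta - \nabla f\cdot\nabla$. So the task reduces to computing $\langle \rd,\Delta_f \rd\rangle$ and showing it equals $|\rd|^2 - 2W_{ijkl}\rdc_{ik}\rdc_{jl} + \tfrac{4}{n-2}\rdc_{ij}\rdc_{jk}\rdc_{ik} - \tfrac{2(n-2)}{n(n-1)}\R|\rd|^2$.

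First I would write $\rdc_{ik} = \R_{ik} - \tfrac{1}{n}\R g_{ik}$, so that
\[
\Delta_f \rdc_{ik} = \Delta_f \R_{ik} - \tfrac{1}{n} g_{ik}\,\Delta_f \R,
\]
and plug in the two evolution equations from Lemma \ref{formu}. This produces an expression with eight terms (seven from $\Delta_f \R_{ik}$ and one from $\tfrac1n g_{ik}\Delta_f \R$). Next I would contract each term with $\rdc^{ik}$. The simplifications come from three algebraic facts used repeatedly: $g_{ik}\rdc^{ik}=0$ (traceless), $W_{ijkl}g^{jl}=0$ (Weyl is trace-free), and $\R_{ik}\rdc^{ik} = |\rd|^2$ after expanding $\R_{ik}$ in the $\rdc_{ik}+\frac{R}{n}g_{ik}$ basis. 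Thus the term $W_{ijkl}\R_{jl}\rdc^{ik}$ collapses to $W_{ijkl}\rdc_{jl}\rdc^{ik}$, and the pure $\R^2 g_{ik}$, $|\Ric|^2 g_{ik}$, $\R g_{ik}$ contributions vanish entirely.

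The one step that needs genuine care is the cubic term $\R_{im}\R^m_k\rdc^{ik}$. Expanding $\R_{im}=\rdc_{im}+\tfrac{\R}{n}g_{im}$ in both factors gives
\[
\R_{im}\R^m_k = \rdc_{im}\rdc^m_k + \tfrac{2\R}{n}\rdc_{ik} + \tfrac{\R^2}{n^2}g_{ik},
\]
and contracting with $\rdc^{ik}$ produces $\rdc_{ij}\rdc_{jk}\rdc_{ik} + \tfrac{2\R}{n}|\rd|^2$. This is the source of both the cubic contribution $\tfrac{4}{n-2}\rdc_{ij}\rdc_{jk}\rdc_{ik}$ and an extra $R|\rd|^2$ piece that must be merged with the $-\tfrac{2n}{(n-1)(n-2)}\R|\rd|^2$ coming from $-n\R\R_{ik}\rdc^{ik}$. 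The expected main obstacle is precisely this coefficient bookkeeping: combining
\[
-\tfrac{2n}{(n-1)(n-2)} + \tfrac{4}{n-2}\cdot\tfrac{2}{n} = \tfrac{-2n^2+8n-8}{n(n-1)(n-2)} = -\tfrac{2(n-2)}{n(n-1)}
\]
yields exactly the scalar curvature coefficient in the statement.

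Finally I would assemble everything: the $R_{ik}\rdc^{ik}$ contribution provides the $|\rd|^2$ term, the Weyl contribution provides $-2W_{ijkl}\rdc_{ik}\rdc_{jl}$, the cubic expansion provides $\tfrac{4}{n-2}\rdc_{ij}\rdc_{jk}\rdc_{ik}$, and the reduced scalar piece provides $-\tfrac{2(n-2)}{n(n-1)}\R|\rd|^2$. Adding $|\nabla \rd|^2$ from the Bochner identity completes the derivation. No auxiliary identities beyond Lemma \ref{formu} and the algebraic properties of $\rd$ and $W$ are required; the proof is essentially a careful index computation.
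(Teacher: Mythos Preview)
Your proposal is correct and follows exactly the approach the paper indicates: the paper does not spell out a proof but simply states that the formula ``can be directly computed by Lemma~\ref{formu}'', and your argument carries out precisely that computation---contracting $\Delta_f \R_{ik}$ against $\rdc^{ik}$, using the trace-free properties of $\rd$ and $W$, and tracking the $\R|\rd|^2$ coefficients. The coefficient bookkeeping you highlight is accurate and yields the stated identity.
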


In the following we will apply the similar arguments of \cite{[Cati],[FX]}
to prove gap theorems on shrinkers. In our case, we need to carefully
deal with the constant $C(n)$ of Sobolev inequality \eqref{sobo}.
\begin{proof}[Proof of Theorem \ref{pingap}]
By Lemma \ref{formulas2}, using
\[
\frac 12\Delta|\rd|^2=|\nabla|\rd||^2+|\rd|\Delta|\rd|
\]
and the Kato inequality
\[
|\nabla\rd|^2\geq |\nabla|\rd||^2
\]
at each point where $|\rd|\neq 0$, we obtain
\begin{equation}\label{ricci}
|\rd|\Delta|\rd|\ge |\rd|^2-2W_{ijkl}\rdc_{ik}\rdc_{jl}+\frac {4}{n-2}\rdc_{ij}\rdc_{jk}\rdc_{ki}-\frac {2(n-2)}{n(n-1)}\R |\rd|^2+\frac 12\langle\nabla f, \nabla|\rd|^2\rangle.
\end{equation}
To simplify the notation, we let $u:=|\rd|$. Then for any positive number $s$,
which will be determined later, by \eqref{ricci}, we compute that
\begin{align*}
u^s\Delta u^s&=s(s-1)u^{2s-2}|\nabla u|^2+s u^{2s-1}\Delta u\\
&=\left(1-\frac{1}{s}\right)|\nabla u^s|^2+s u^{2s-2}\,u\Delta u\\
&\ge\left(1-\frac{1}{s}\right)|\nabla u^s|^2+su^{2s}+s\left(-2W_{ijkl}\rdc_{ik}\rdc_{jl}+\frac{4}{n-2}\rdc_{ij}\rdc_{jk}\rdc_{ki}\right)u^{2s-2}\\
&\quad-\frac {2(n-2)}{n(n-1)}s \R u^{2s}+\frac{s}{2} u^{2s-2}\langle\nabla f, \nabla u^2\rangle.
\end{align*}
Using Lemma \ref{prlg}, we further have
\begin{align*}
u^s\Delta u^s
&\ge\left(1-\frac{1}{s}\right)|\nabla u^s|^2+s u^{2s}-\sqrt{\frac {2(n-2)}{n-1}}s\left(|W|^2+\frac{8u^2}{n(n-2)}\right)^{\frac 12}u^{2s}\\
&\quad-\frac {2(n-2)}{n(n-1)}s\R u^{2s}+\frac 12\langle\nabla f, \nabla u^{2s}\rangle.
\end{align*}

Since $M^n$ is closed, integrating by parts over $M^n$ and using the equality
\[
\Delta f=\frac n2-\R
\]
from Lemma \ref{formu},  we have that
\begin{align*}
0&\ge\left(2-\frac{1}{s}\right)\int_M |\nabla u^s|^2dv+s\int_M u^{2s}dv
-\sqrt{\frac{2(n-2)}{n-1}}s\int_M\left(|W|^2+\frac{8 u^2}{n(n-2)}\right)^{\frac 12}u^{2s}dv\\
&\quad-\frac{2(n-2)}{n(n-1)}s\int_M \R u^{2s}dv -\frac 12 \int_M u^{2s}\Delta fdv\\
&=\left(2-\frac{1}{s}\right)\int_M |\nabla u^s|^2dv
-\sqrt{\frac {2(n-2)}{n-1}}s\int_M\left(|W|^2+\frac{8 u^2}{n(n-2)}\right)^{\frac 12}u^{2s}dv\\
&\quad-\left(\frac n4-s\right)\int_M u^{2s}dv+\frac{n(n-1)-4(n-2)s}{2n(n-1)}\int_M \R u^{2s}dv.
\end{align*}
For $2-{1}/{s}>0$, by the Sobolev inequality of shrinker using $\varphi=u^s$
\begin{equation}\label{sobinequ}
\int_M|\nabla u^s|^2dv\ge\frac{e^{\frac{2\mu}{n}}}{4C(n)}\left(\int_Mu^{\frac{2ns}{n-2}}\,dv\right)^{\frac{n-2}{n}}
-\frac 14\int_M\R u^{2s}dv,
\end{equation}
the above inequality becomes
\begin{align*}
0&\ge\left(2-\frac{1}{s}\right)\frac{e^{\frac{2\mu}{n}}}{4C(n)}\left(\int_M  u^{\frac{2ns}{n-2}}dv\right)^{\frac{n-2}{n}}
-\sqrt{\frac {2(n-2)}{n-1}}s\int_M\left(|W|^2+\frac{8 u^2}{n(n-2)}\right)^{\frac 12}u^{2s}dv\\
&\quad-\left(\frac n4-s\right)\int_M u^{2s}dv+\frac{n(n-1)-8(n-2)s^2}{4n(n-1)s}\int_M \R u^{2s}dv.
\end{align*}
By the H\"{o}lder inequality, for $n-4s\ge0$, we get that
\begin{align*}
0&\ge\left\{\left(2-\frac{1}{s}\right)\frac{e^{\frac{2\mu}{n}}}{4C(n)}-\sqrt{\frac {2(n-2)}{n-1}}s\left[\int_M\left(|W|^2+\frac{8 u^2}{n(n-2)}\right)^{\frac n4}dv\right]^{\frac 2n}-\left(\frac n4-s\right)V(M)^{\frac 2n}\right\}\\
&\quad\times\left(\int_M  u^{\frac{2ns}{n-2}}dv\right)^{\frac{n-2}{n}}
+\frac{n(n-1)-8(n-2)s^2}{4n(n-1)s}\int_M \R u^{2s}dv.
\end{align*}
Now we choose
\[
s=\sqrt{\frac{n(n-1)}{8(n-2)}}\in\left(\frac 12,\,\,\frac n4\right],
\]
and the last term of the above inequality vanishes. Moreover, notice that the
curvature integral assumption of theorem is equivalent to
\begin{equation}\label{pinchco}
\left(2-\frac{1}{s}\right)\frac{e^{\frac{2\mu}{n}}}{4C(n)}-\sqrt{\frac {2(n-2)}{n-1}}s\left[\int_M \left(|W|^2+\frac{8 u^2}{n(n-2)}\right)^{\frac n4}dv\right]^{\frac 2n}-\left(\frac n4-s\right)V(M)^{\frac 2n}>0,
\end{equation}
where we used the equality
\[
\left|W+\frac{\sqrt{2}}{\sqrt{n}(n-2)}\rd\circ g\right|^2=|W|^2+\frac{8}{n(n-2)}|\rd|^2
\]
due to the totally trace-free tensor $W$. Therefore, we conclude that $|\rd|\equiv 0$
and $(M,g, f)$ is Einstein.

Now we have $\Ric=\frac12 g$. By \eqref{Eq2}, we know
\[
f=\frac n2 \quad\mathrm{and}\quad (4\pi e)^{-\frac n2}V(M)=e^{\mu}
\]
and the pinching condition \eqref{pinchco} reduces to
\begin{equation}\label{pic}
\left({\bbint}_M|W|^{\frac n2}dv\right)^{\frac 2n}\le\epsilon_1(n):=
\sqrt{\frac{n-1}{32(n-2)}}\left[\left(\frac{2}{s}-\frac{1}{s^2}\right)\frac{C(n)^{-1}}{4\pi e}+4-\frac{n}{s}\right],
\end{equation}
where ${\bbint}_M$ denotes the average of the integration, i.e.,
\[
{\bbint}_M|W|^{\frac n2}dv=\frac{1}{V(M)}\int_M|W|^{\frac n2}dv.
\]
By Remark \ref{xian}, we see
\[
C(n)\ge\frac{n-1}{2n(n-2)\pi e}
\]
and hence
\[
\epsilon_1(n)\leq\sqrt{\frac{n-1}{32(n-2)}}\left[\left(\frac{2}{s}-\frac{1}{s^2}\right)\frac{n(n-2)}{2(n-1)}
+4-\frac{n}{s}\right].
\]
Notice that the right hand side of the above inequality is nonnegative if
\begin{equation}\label{picond}
s\ge\frac{n+\sqrt{n^2+8n(n-1)(n-2)}}{8(n-1)}.
\end{equation}
Since $s=\sqrt{\frac{n(n-1)}{8(n-2)}}$, it is easy to check that the above inequality
only holds only when $4\le n\le 8$. We then carefully calculate the constants as follows:
\begin{align*}
\epsilon_1(4)&\le\frac{5\sqrt{3}-6}{18}\approx .1478,
\quad\epsilon_1(5)\le\frac{7\sqrt{6}-6\sqrt{5}}{48}\approx .0778,
\quad\epsilon_1(6)\le \frac{9\sqrt{10}-10\sqrt{6}}{100}\approx .0397,\\
\epsilon_1(7)&\le\frac{\sqrt{3}(11-\sqrt{105})}{36\sqrt{5}}\approx .0162,
\quad\epsilon_1(8)\le\frac{13\sqrt{21}-42\sqrt{2}}{294}\approx 6\times 10^{-4}.
\end{align*}
Obviously, these constants in \eqref{pic} are strictly smaller than those in the
following Proposition \ref{Eisrig}, and hence $(M,g,f)$ is isometric to a quotient of the sphere.
\end{proof}

The following result is an gap result for Einstein manifolds, which was essentially proved
by Catino \cite{[Cati]}. The present version of pinching constants is a little better than
Theorem 3.3 in \cite{[Cati]} and seems to be more suitable to our applicable purpose.

\begin{proposition}\label{Eisrig}
Let $(M^{n},g)$ be an $n$-dimensional Einstein manifold with $\mathrm{Ric}=k g$, where
$k>0$ is a constant. There exists a positive constant $\epsilon_2(n)$ depending only on $n$
such that if
\[
\left(\bbint_M|W|^{\frac n2}dv\right)^{\frac 2n}<\epsilon_2(n),
\]
where ${\bbint}_M|W|^{\frac n2}dv=\frac{1}{V(M)}\int_M|W|^{\frac n2}dv$,
then $(M^{n},g)$ is isometric to a quotient of the round sphere with radius $\sqrt{\frac{n-1}{k}}$.
We can take $\epsilon_2(4)=\frac{14}{5\sqrt{6}}k$, $\epsilon_2(5)=\frac{4}{5}k$,
$\epsilon_2(6)=\frac{16\sqrt{3}}{9\sqrt{70}}k$, $\epsilon_2(7)=\frac{49}{125}k$,
$\epsilon_2(8)=\frac{267}{625}k$, $\epsilon_2(9)=\frac{23}{50}k$ and
$\epsilon_2(n)=\frac{2n}{5(n-1)}k$  if $n\ge 10$.
\end{proposition}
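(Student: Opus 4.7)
The plan is to adapt the strategy of Theorem \ref{pingap}, replacing the traceless Ricci tensor (which vanishes here) by the Weyl tensor $W$, and replacing the shrinker Sobolev inequality by one valid on Einstein manifolds with $\Ric = kg > 0$. Note that Myers' theorem makes $M$ automatically compact, so all integrations by parts are free. The starting point is the Bochner--Weitzenb\"ock identity for $W$ on an Einstein background, which under $\Ric = kg$ simplifies via the contracted second Bianchi identity to the schematic form
$$\tfrac12\Delta|W|^2 = |\nabla W|^2 + \kappa(n)\,k\,|W|^2 - \bigl(2W_{ijkl}W_{ipkq}W_{pjql} + \tfrac12 W_{ijkl}W_{klpq}W_{pqij}\bigr),$$
for an explicit dimensional constant $\kappa(n)$. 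Bounding the cubic Weyl block by $c(n)|W|^3$ via Lemma \ref{leal2} and applying Kato's inequality $|\nabla W|^2 \ge |\nabla|W||^2$ at points where $u := |W| \neq 0$ yields
$$u\Delta u \ge \kappa(n)\,k\,u^2 - c(n)\,u^3.$$

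For a parameter $s > 1/2$ to be chosen, I would then raise this inequality to the $s$-th power using the identity $u^s\Delta u^s = (1-1/s)|\nabla u^s|^2 + s u^{2s-2}(u\Delta u)$, as in the proof of Theorem \ref{pingap}, to obtain
$$u^s\Delta u^s \ge \Bigl(1-\tfrac{1}{s}\Bigr)|\nabla u^s|^2 + \kappa(n)\,k\,s\,u^{2s} - c(n)\,s\,u^{2s+1}.$$
Integrating over the closed manifold, applying H\"older with conjugate exponents $n/(n-2)$ and $n/2$ to the cubic term, and invoking a sharp Sobolev inequality valid on compact Einstein manifolds with $\Ric = kg > 0$ (of Aubin--Ilias--Hebey type, with constants explicit in $n$, $k$, and $V(M)$) applied to $u^s$, I convert the gradient term on the left into a lower bound for $(\int_M u^{2ns/(n-2)}\,dv)^{(n-2)/n}$. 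Rearranging places the inequality in the form
$$\Bigl[\,A(n,s,k,V(M)) - c(n)\,s\,\bigl(\bbint_M u^{n/2}\,dv\bigr)^{2/n}\Bigr]\Bigl(\int_M u^{\frac{2ns}{n-2}}\,dv\Bigr)^{\frac{n-2}{n}} \le 0$$
for an explicit $A>0$ when $s>1/2$. Defining $\epsilon_2(n)$ as the supremum of $A(n,s,k,V(M))/(c(n)s)$ over the admissible range of $s$ (which, by the volume cancellation in the average $\bbint$, becomes a pure dimensional multiple of $k$), the pinching hypothesis makes the bracket strictly positive and forces $u\equiv 0$, i.e.\ $W\equiv 0$.

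Once $W \equiv 0$ and $\Ric = kg > 0$, the manifold has constant sectional curvature $k/(n-1)$; by the Killing--Hopf theorem $(M,g)$ is a finite isometric quotient of the round sphere of radius $\sqrt{(n-1)/k}$. The main obstacle is the numerical bookkeeping needed to extract the announced explicit thresholds: the cubic Weyl constant $c(n)$ of Lemma \ref{leal2} stabilises at $5/2$ for $n \ge 7$, which together with the dimension-dependent Sobolev constant and the optimal parameter $s$ explains the uniform formula $\epsilon_2(n) = \tfrac{2n}{5(n-1)}k$ for $n \ge 10$, while in the lower dimensions $n \in \{4,5,6\}$ the sharper values of $c(n)$ produce the correspondingly better constants, and the transitional cases $n \in \{7,8,9\}$ require case-by-case balancing. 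A secondary subtlety is choosing the most advantageous Einstein Sobolev inequality (Lichnerowicz--Obata based versus Bishop--Gromov based versus Yamabe-type) so that the resulting $\epsilon_2(n)$ agrees with the announced values.
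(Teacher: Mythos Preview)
Your overall architecture matches the paper's: Bochner--Weitzenb\"ock for $|W|^2$ on an Einstein background (where the linear coefficient is simply $2k$, not a mysterious $\kappa(n)$, and the cubic Weyl block carries a factor of $2$), the algebraic bound of Lemma~\ref{leal2}, the power trick $u^s = |W|^s$, H\"older on the cubic term, and a sharp Sobolev inequality. The paper also commits to a specific Sobolev inequality---Ilias's inequality for compact manifolds with $\Ric\ge kg$,
\[
\Bigl(\int_M \varphi^{\frac{2n}{n-2}}dv\Bigr)^{\frac{n-2}{n}}\le \frac{4(n-1)}{n(n-2)k}V(M)^{-2/n}\!\int_M|\nabla\varphi|^2dv+V(M)^{-2/n}\!\int_M\varphi^2dv,
\]
applied with $\varphi=u^s$; the volume factors then cancel exactly against the averaged hypothesis, and one is left with two linear inequalities in $\epsilon_2$ (coming from the $\int|\nabla u^s|^2$ and $\int u^{2s}$ coefficients) to be solved for a well-chosen $s$ in each dimension.

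There is, however, one genuine gap in your outline: you invoke only the standard Kato inequality $|\nabla W|^2\ge|\nabla|W||^2$, whereas the paper uses the \emph{refined} Kato inequality for the Weyl tensor on an Einstein manifold,
\[
|\nabla W|^2\ \ge\ \frac{n+1}{n-1}\,|\nabla|W||^2,
\]
which leaves an extra $\frac{2}{n-1}|\nabla|W||^2$ on the right-hand side of $u\Delta u\ge\cdots$. After the $u^s$ manipulation and integration, this upgrades the gradient coefficient from your $(2-\tfrac1s)$ to the paper's $\bigl(2-\tfrac{n-3}{(n-1)s}\bigr)$. That improvement is not cosmetic: the announced values of $\epsilon_2(n)$---in particular $\epsilon_2(4)=\tfrac{14}{5\sqrt6}k$ with $s=\tfrac{7}{10}$, and the stabilised formula $\epsilon_2(n)=\tfrac{2n}{5(n-1)}k$ for $n\ge10$ with $s=\tfrac{n}{n-1}$---are obtained precisely from this sharper coefficient together with Ilias's constants. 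With only standard Kato your scheme still yields \emph{some} positive $\epsilon_2(n)$, but strictly smaller than the ones stated, so the proposition as written would not be proved.
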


\begin{proof}[Proof of Proposition \ref{Eisrig}]
Following the argument in \cite{[HV],[Cati]}, we have the Bochner type formula for $|W|^2$,
\[
\frac{1}{2}\Delta|W|^2=|\nabla W|^2+2k|W|^2-2\left(2W_{ijkl}W_{ipkq}W_{pjql}+\frac{1}{2}W_{ijkl}W_{klpq}W_{pqij}\right).
\]
Since
\[
\frac 12\Delta|W|^2=|\nabla|W||^2+|W|\Delta|W|,
\]
then we have
\[
|W|\Delta|W|=|\nabla W|^2-|\nabla|W||^2+2k|W|^2-2\left(2W_{ijkl}W_{ipkq}W_{pjql}+\frac{1}{2}W_{ijkl}W_{klpq}W_{pqij}\right).
\]
Using Lemma \ref{leal2}
and the refined Kato inequality
\[
|\nabla W|^2\ge\frac{n+1}{n-1}|\nabla|W||^2
\]
at every point where $|W|\neq 0$, we get
\begin{equation}\label{wye}
|W|\Delta|W|\ge\frac{2}{n-1}|\nabla W|^2+2k|W|^2-2c(n)|W|^3,
\end{equation}
where $c(n)$ is a dimensional constant, which is defined by $c(4)=\frac{\sqrt{6}}{4}$, $c(5)=1$, $c(6)=\frac{\sqrt{70}}{2\sqrt{3}}$ and $c(n)=\frac{5}{2}$ for $n\geq 7$.

Similar to the preceding computation, we consider the quantity $u^s:=|W|^s$,
where $s$ is a positive number, which will be chosen later. Using \eqref{wye}, we compute
\begin{align*}
u^s\Delta u^s&=\left(1-\frac{1}{s}\right)|\nabla u^s|^2+s u^{2s-2}\,u\Delta u\\
&\ge\left(1-\frac{1}{s}\right)|\nabla u^s|^2+\frac{2s}{n-1}u^{2s-2}|\nabla u|^2+2ksu^{2s}-2c(n)su^{2s+1}\\
&=\left(1-\frac{n-3}{(n-1)s}\right)|\nabla u^s|^2+2ksu^{2s}-2c(n)su^{2s+1}.
\end{align*}
Since $M^n$ is closed, integrating the above inequality over $M^n$ yields
\begin{equation}\label{wye2}
0\ge\left(2-\frac{n-3}{(n-1)s}\right)\int_M|\nabla u^s|^2dv+2ks\int_Mu^{2s}dv-2c(n)s\int_{M}u^{2s+1}dv.
\end{equation}
Recall that Ilias \cite{[Il]} proved the Sobolev inequality of manifolds satisfying
$\mathrm{Ric}\ge kg$, where $k>0$, (by letting $f=|W|^s=u^s$ in \cite{[Il]}), which can be stated
that in our setting
\[
\left(\int_M u^{\frac{2ns}{n-2}}\,dv\right)^{\frac{n-2}{n}}\le
\frac{4(n-1)}{n(n-2)k}V(M)^{-2/n}\int_M|\nabla u^s|^2dv+V(M)^{-2/n}\int_M u^{2s} dv.
\]
Applying the H\"older inequality and the above Sobolev inequality, \eqref{wye2} becomes
\begin{align*}
0\ge&\left(2-\frac{n-3}{(n-1)s}\right)\int_M|\nabla u^s|^2dv+2ks\int_Mu^{2s}dv
-2c(n)s\left(\int_{M}u^{\frac n2}dv\right)^{\frac 2n}\left(\int_Mu^{\frac{2ns}{n-2}}dv\right)^{\frac{n-2}{n}}\\
\ge&\left(2-\frac{n-3}{(n-1)s}\right)\int_M|\nabla u^s|^2dv+2ks\int_Mu^{2s}dv\\
&-2c(n)sV(M)^{-2/n}\left(\int_Mu^{\frac n2}dv\right)^{\frac 2n}\left(\frac{4(n-1)}{n(n-2)k}\int_M|\nabla u^s|^2dv
+\int_Mu^{2s}dv \right).
\end{align*}
By the proposition assumption, we have the following equivalent form
\[
\left(\int_Mu^{\frac n2}dv\right)^{\frac 2n}< \epsilon_2(n)V(M)^{2/n}.
\]
Therefore, for $s>0$, if $\epsilon_2(n)$ satisfies
\begin{equation*}
\begin{cases}
2-\frac{n-3}{(n-1)s}-8c(n)s\epsilon_2(n)\frac{(n-1)}{n(n-2)k} \,\geq\, 0,\\
2ks-2c(n)\epsilon_2(n) \,\geq\, 0,
\end{cases}
\end{equation*}
we immediately have $W\equiv 0$ and $g$ has constant positive sectional curvature.
Here we give explicit constants such that the above two inequalities holds.

When $n=4$ and $s=\frac{7}{10}$, since $c(4)=\frac{\sqrt{6}}{4}$, we can take $\epsilon_2(4)=\frac{14}{5\sqrt{6}}k$.

When $n=5$ and $s=\frac{4}{5}$, since $c(5)=1$, we can take $\epsilon_2(5)=\frac{4}{5}k$.

When $n=6$ and $s=\frac{9}{10}$, since $c(6)=\frac{\sqrt{70}}{2\sqrt{3}}$, we can take $\epsilon_2(6)=\frac{16\sqrt{3}}{9\sqrt{70}}k$.

When $n=7$ and $s=\frac{49}{50}$, since $c(7)=\frac{5}{2}$, we can take $\epsilon_2(7)=\frac{49}{125}k$.

When $n=8$ and $s=\frac{267}{250}$, since $c(8)=\frac{5}{2}$, we can take $\epsilon_2(8)=\frac{267}{625}k$.

When $n=9$ and $s=\frac{23}{20}$, since $c(9)=\frac{5}{2}$, we can take $\epsilon_2(9)=\frac{23}{50}k$.

When $n\ge10$ and $s=\frac{n}{n-1}$, since $c(n)=\frac{5}{2}$, we can take $\epsilon_2(n)=\frac{2n}{5(n-1)}k$.
\end{proof}
\section{Gap result for half Weyl tensor}\label{half}
In this section we will apply the similar argument of Section \ref{sec4} to talk about an gap
phenomenon for shrinkers under the integral condition of half Weyl tensor.

Recall that, on an oriented $4$-dimensional manifold $(M,g)$, the bundle of $2$-forms $\wedge^2 TM$
can be decomposed as a direct sum
\[
\wedge^2 TM=\wedge^+ M\oplus\wedge^-M,
\]
where $\wedge^{\pm} M$ is the $(\pm 1)$-eigenspace of the Hodge star operator
\[
\star: \wedge^2 TM \rightarrow \wedge^2 TM.
\]
Sections of $\wedge^{\pm} M$ are called self-dual and anti-self-dual 2-forms.
Moreover, the Hodge star operator can further induce a decomposition for the curvature operator
$\mathfrak{R}: \wedge^2 TM \rightarrow \wedge^2 TM$,
\begin{equation*}
\mathfrak{R} =\left( \begin{array}{cc}
\frac{\R}{12}g+W^+ & \rd \\
\rd & \frac{\R}{12}g+W^-
\end{array} \right),
\end{equation*}
for the half Weyl tensor $W^{\pm}$ the restriction of the Weyl tensor $W$ to $\wedge^{\pm} M$.
Here $W^{\pm}:\wedge^{\pm}M \rightarrow \wedge^{\pm}M $ are also called the self-dual part
and anti-self-dual part of the Weyl tensor $W$, respectively.

In local coordinates, let $\{e_i\}^4_{i=1}$ be an oriented
orthonormal basis of tangent bundle $T M$. For any pair $(ij)$, $1\leq i\neq j\leq 4$,
let $(i'j')$ denote the dual of $(ij)$, i.e., the pair such that
\[
e_i\wedge e_j\pm e_{i'}\wedge e_{j'}\in \wedge^{\pm}M.
\]
In other words,
$(iji'j')=\sigma(1234)$ for some even permutation $\sigma\in S_4$. For the
Weyl tensor $W$, its (anti-)self-dual part is
\begin{equation*}
W^{\pm}_{ijkl}=\frac{1}{4}\left(W_{ijkl}\pm W_{ijk'l'}\pm W_{i'j'kl}+W_{i'j'k'l'}\right).
\end{equation*}
It is easy to check that
\[
W^{\pm}_{ijkl}=\pm W^{\pm}_{ijk'l'}=\pm W^{\pm}_{i'j'kl}=W^{\pm}_{i'j'k'l'}=\frac{1}{2}(W_{ijkl}{\pm}W_{ijk'l'}).
\]

On shrinkers, we have the following Weitzenb\"ock formula for the half Weyl tensor $W^{\pm}$
(see \cite{[CaTr]} or its generalization \cite{[Wp]}), and it is useful for analyzing the structure
of shrinkers, see for example \cite{[WWW]}.
\begin{lemma} \label{weitenbock}
Let $(M, g, f)$ be a four-dimensional shrinker satisfying \eqref{Eq0}. Then
\[
\frac 12\Delta_f|W^{\pm}|^2=|\nabla W^{\pm}|^2+2\lambda|W^{\pm}|^2-18\det
W^{\pm}-\frac 12\langle(\overset{\circ}{\mathrm{Ric}}\circ\overset{\circ}{\mathrm{Ric}})^{\pm},W^{\pm}\rangle.
\]
\end{lemma}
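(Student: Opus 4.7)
The plan is to first establish a general Weitzenb\"ock identity on an oriented four-manifold and then invoke the soliton equation \eqref{Eq0} to pass from $\Delta$ to $\Delta_f$ and to simplify the curvature tail. I start from the pointwise tautology
\[
\tfrac12\Delta|W^{\pm}|^2 = |\nabla W^{\pm}|^2 + \langle \Delta W^{\pm},W^{\pm}\rangle,
\]
so the task reduces to expressing $\langle \Delta W^{\pm},W^{\pm}\rangle$ as a combination of algebraic curvature invariants plus a $\nabla f$-derivative.

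First I would apply the contracted second Bianchi identity specialized to $n=4$, namely
\[
\nabla^i W_{ijkl}=\tfrac12\bigl(\nabla_k\R_{jl}-\nabla_l\R_{jk}\bigr)-\tfrac{1}{12}\bigl(g_{jl}\nabla_k\R-g_{jk}\nabla_l\R\bigr),
\]
and commute covariant derivatives once more to expand $\nabla^j\nabla^i W_{ijkl}$ and hence $\Delta W^{\pm}$. Decomposing Riemann into its Weyl, traceless Ricci and scalar parts and then projecting onto the $\pm 1$-eigenspace of the Hodge star, the purely algebraic contractions with $W^{\pm}$ collapse onto three invariants: the scalar piece $\tfrac{\R}{2}|W^{\pm}|^2$, the cubic self-interaction $-18\det W^{\pm}$ (using that $W^{\pm}$ acts as a trace-free symmetric endomorphism of the rank-three bundle $\wedge^{\pm}M$, so its natural cubic contraction reduces to a determinant), and the Ricci cross term $-\tfrac12\langle(\rd\circ\rd)^{\pm},W^{\pm}\rangle$. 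This yields the universal four-dimensional Weitzenb\"ock identity.

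Next I would invoke the two Hamilton-type identities on shrinkers obtained by differentiating \eqref{Eq0} and using the commutator $[\nabla_k,\nabla_i]\nabla_j f=-\R_{kijl}\nabla^l f$,
\[
\nabla_k\R_{ij}-\nabla_i\R_{kj}=\R_{kijl}\nabla^l f,\qquad \nabla_k\R=2\R_{kl}\nabla^l f,
\]
and substitute them into the derivative-of-Ricci contributions produced by the Bianchi step. The resulting contractions of $\R_{ijkl}\nabla^l f$ with $W^{\pm}$, after the $\pm$ projection and using that $W^{\pm}$ is totally trace-free, telescope into exactly $\tfrac12\langle\nabla f,\nabla|W^{\pm}|^2\rangle$. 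Transposing this term to the left converts $\Delta$ into $\Delta_f$. The scalar coefficient $\tfrac{\R}{2}$ is then reassembled with the $-\tfrac12(\Delta f)|W^{\pm}|^2$ piece emerging from the derivative rearrangement, and the trace of \eqref{Eq0}, namely $\R+\Delta f=4\lambda$, produces the clean coefficient $2\lambda$.

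The principal obstacle is the index bookkeeping: one must verify that no residual pairing of $W^{\pm}$ against a tensor of the form $g\circ(\cdot)$ survives the $\pm$ projection, so that the only algebraic Ricci remnant is the trace-free $(\rd\circ\rd)^{\pm}$ term, and that the contracted Riemann-times-$\nabla f$ contribution assembles precisely into a gradient of $|W^{\pm}|^2$. This rests on repeated use of the totally trace-free property of $W^{\pm}$ together with its Hodge self-duality. Once the bookkeeping is done, the numerical coefficients $2\lambda$, $-18$ and $-\tfrac12$ drop out by collecting terms, recovering the identity stated in Cao--Tran \cite{[CaTr]} and generalized by P. Wu \cite{[Wp]}, to which one may refer for the detailed component-level verification.
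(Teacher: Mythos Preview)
The paper does not prove this lemma at all: it simply states the identity and cites Cao--Tran \cite{[CaTr]} and P.~Wu \cite{[Wp]}. Your proposal outlines the standard derivation (Bianchi identity, curvature decomposition, soliton identities to pass from $\Delta$ to $\Delta_f$) and ultimately also defers to exactly those references for the component-level verification, so in that sense you are aligned with the paper's approach.

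One small imprecision worth flagging: you write ``expand $\nabla^j\nabla^i W_{ijkl}$ and hence $\Delta W^{\pm}$,'' but the double divergence $\nabla^j\nabla^i W_{ijkl}$ is not the rough Laplacian $g^{mn}\nabla_m\nabla_n W_{ijkl}$. The correct bridge is to take the differential (second) Bianchi identity in the form $\nabla_m W_{ijkl}+\nabla_i W_{jmkl}+\nabla_j W_{mikl}=\text{(Cotton terms)}$, contract with $\nabla^m$, and commute derivatives; this produces $\Delta W_{ijkl}$ in terms of $\nabla(\delta W)$ plus algebraic curvature pieces. Your sketch is morally right but this step should be phrased more carefully if you intend it to stand on its own rather than as a pointer to \cite{[CaTr],[Wp]}.
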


In this paper we can apply Lemma \ref{weitenbock} to prove Theorem \ref{hagap} in the introduction.

\begin{proof}[Proof of Theorem \ref{hagap}]
By Lemma \ref{weitenbock}, using the following algebraic inequality observed by Cao and Tran \cite{[CaTr]}
\[
\det W^{\pm}\le \frac{\sqrt{6}}{18}|W^{\pm}|^3
\]
and the Kato inequality
\[
|\nabla W^{\pm}|^2\ge|\nabla|W^{\pm}||^2
\]
at every point where $|W^{\pm}|\neq 0$, we get
\[
\frac 12\Delta_f|W^{\pm}|^2\ge|\nabla|W^{\pm}||^2+2\lambda|W^{\pm}|^2-\sqrt{6}|W^{\pm}|^3-\frac 12\langle(\overset{\circ}{\mathrm{Ric}}\circ\overset{\circ}{\mathrm{Ric}})^{\pm},W^{\pm}\rangle.
\]
Since
\[
\frac 12\Delta|W^\pm|^2=|\nabla|W^\pm||^2+|W^\pm|\Delta|W^\pm|,
\]
then we have
\[
|W^\pm|\Delta|W^\pm|\ge 2\lambda|W^{\pm}|^2-\sqrt{6}|W^{\pm}|^3-\frac 12\langle(\overset{\circ}{\mathrm{Ric}}\circ\overset{\circ}{\mathrm{Ric}})^{\pm},W^{\pm}\rangle+\frac 12\langle\nabla f, \nabla|W^{\pm}|^2\rangle.
\]
Since $M^n$ is closed, integrating the above inequality and integrating by parts over $M^n$,
we have
\begin{equation}
\begin{aligned}\label{inteineqs}
0&\ge\int_M|\nabla |W^\pm||^2dv+2\lambda\int_M|W^{\pm}|^2dv-\sqrt{6}\int_M|W^{\pm}|^3dv\\
&\quad-\frac 12\int_M\langle(\overset{\circ}{\mathrm{Ric}}\circ\overset{\circ}{\mathrm{Ric}})^{\pm},W^{\pm}\rangle dv
-\frac 12\int_M |W^{\pm}|^2 \Delta fdv.
\end{aligned}
\end{equation}
Note that Cao and Tran (Corollary 5.8 in \cite{[CaTr]}) observed
\[
\int_M\langle(\overset{\circ}{\mathrm{Ric}}\circ\overset{\circ}{\mathrm{Ric}})^{\pm},W^{\pm}\rangle dv
=4\int_M|\delta W^{\pm}|^2dv,
\]
where $\delta$ is the divergence. Hence the second assumption of theorem in fact is
\[
\int_M\langle(\overset{\circ}{\mathrm{Ric}}\circ\overset{\circ}{\mathrm{Ric}})^{\pm},W^{\pm}\rangle dv
\le\frac{1}{2}\int_M{\R}|W^{\pm}|^2dv.
\]
Using this, \eqref{inteineqs} becomes
\begin{align*}
0&\ge\int_M|\nabla |W^\pm||^2dv+2\lambda\int_M|W^{\pm}|^2dv-\sqrt{6}\int_M|W^{\pm}|^3dv\\
&\quad-\frac{1}{4}\int_M{\R}|W^{\pm}|^2dv-\frac 12\int_M |W^{\pm}|^2 \Delta fdv.
\end{align*}
Using the equality of shrinkers
\[
\Delta f=4\lambda-\R,
\]
we further have
\begin{equation}\label{evoineqty}
0\ge\int_M|\nabla |W^\pm||^2dv-\sqrt{6}\int_M|W^{\pm}|^3dv+\frac{1}{4}\int_M{\R}|W^{\pm}|^2dv.
\end{equation}
By the Sobolev inequality of shrinker \eqref{sobo} by letting $\varphi=|W^{\pm}|$,
\[
\int_M|\nabla |W^{\pm}||^2dv\ge\frac{e^{\frac{\mu}{2}}}{4C(4)}\left(\int_M|W^{\pm}|^4dv\right)^{\frac{1}{2}}
-\frac 14\int_M\R |W^{\pm}|^2dv,
\]
then \eqref{evoineqty} can be simplified as
\begin{align*}
0&\ge\frac{e^{\frac{\mu}{2}}}{4C(4)}\left(\int_M|W^{\pm}|^4dv\right)^{\frac{1}{2}}-\sqrt{6}\int_M|W^{\pm}|^3dv.
\end{align*}
Using the H\"{o}lder inequality,
\begin{align*}
0&\ge\left[\frac{e^{\frac{\mu}{2}}}{4C(4)}-\sqrt{6}\left(\int_M|W^{\pm}|^2dv\right)^{\frac{1}{2}}\right]
\left(\int_M|W^{\pm}|^4dv\right)^{\frac{1}{2}}.
\end{align*}
By the first assumption of theorem, we immediately get $W^{\pm}\equiv0$. Finally we apply
the classification of Chen and Wang \cite{[ChWa]} to conclude that the shrinker
is isometric to a finite quotient of the round sphere or the complex projective
space.
\end{proof}

In the end of this section,  following the argument of Catino \cite{[Cati]}, we
can apply the Yamabe constant to give another gap result, i.e., Theorem \ref{hagap2}
in introduction.

\begin{proof}[Proof of Theorem \ref{hagap2}]
Similar to the argument of Theorem \ref{hagap}, using the second assumption of
theorem, \eqref{inteineqs} can also be written as
\begin{align*}
0&\ge\int_M|\nabla |W^\pm||^2dv+2\lambda\int_M|W^{\pm}|^2dv-\sqrt{6}\int_M|W^{\pm}|^3dv\\
&\quad-\frac 13\int_M{\R}|W^{\pm}|^2dv-\frac 12\int_M |W^{\pm}|^2 \Delta fdv.
\end{align*}
Using the shrinker's equality
\[
\Delta f=4\lambda-\R,
\]
we obtain
\begin{equation}\label{evoineqtyd}
0\ge\int_M|\nabla |W^\pm||^2dv-\sqrt{6}\int_M|W^{\pm}|^3dv+\frac 16\int_M{\R}|W^{\pm}|^2dv.
\end{equation}
We will apply the Yamabe constant to estimate the first gradient term in the above inequality.
Recall that the Yamabe constant $Y(M,[g])$ is defined by
\[
Y(M,[g]):=\inf_{\varphi\in W^{1,2}(M)}\frac{\frac{4(n-1)}{n-2}\int_M|\nabla\varphi|^2dv_g+\int_M\R \varphi^2dv_g}
{(\int_M|\varphi|^{2n/(n-2)}dv_g)^{(n-2)/n}},
\]
where $[g]$ denotes the conformal class of $g$. As we all known, $Y(M,[g])$ is positive
on a compact manifold if and only if there exits a conformal metric in $[g]$ whose scalar
curvature is positive everywhere. Hence the compact shrinker has positive Yamabe constant
$Y(M,[g])$. If we let $\varphi=|W^{\pm}|$ on a four-dimensional compact shrinker, then
the Yamabe constant $Y(M,[g])$ implies the following inequality
\[
\int_M|\nabla |W^{\pm}||^2dv\ge\frac{Y(M,[g])}{6}\left(\int_M|W^{\pm}|^4dv\right)^{\frac{1}{2}}
-\frac 16\int_M\R |W^{\pm}|^2dv.
\]
Using this, \eqref{evoineqtyd} can be reduced to
\[
0\ge\frac{Y(M,[g])}{6}\left(\int_M|W^{\pm}|^4dv\right)^{\frac{1}{2}}
-\sqrt{6}\int_M|W^{\pm}|^3dv.
\]
By the H\"{o}lder inequality, we have
\begin{equation}\label{evqya}
0\ge\left[\frac{Y(M,[g])}{6}-\sqrt{6}\left(\int_M|W^{\pm}|^2dv\right)^{\frac{1}{2}}\right]
\left(\int_M|W^{\pm}|^4dv\right)^{\frac{1}{2}}.
\end{equation}
Recall that Gursky \cite{[Gu94]} proved the following estimate on a compact four-dimensional
 manifold
\[
\int_M\R^2dv-12\int_M|\rd|^2dv\le Y^2(M,[g]).
\]
Here this inequality is strict unless the manifold is conformally Einstein. Combining this
with the first assumption of theorem, we have
\[
6\sqrt{6}\left(\int_M|W^{\pm}|^2dv\right)^{\frac{1}{2}}\le Y(M,[g]).
\]
Combining this with \eqref{evqya} we conclude that $W^{\pm}\equiv 0$ or $(M,g)$ is conformally Einstein.

When $W^{\pm}\equiv0$, by Theorem \ref{hagap}, $(M^4,g, f)$ is isometric to a finite quotient of
the round sphere or the complex projective space.

When $(M,g)$ is conformally Einstein, it is naturally
Bach flat (see Proposition 4.78 in \cite{[Be]}) and hence is Einstein (see Theorem 1.1 in \cite{[CaCh]}).
Now since $(M^4,g)$ is Einstein, combining the first pinching condition of theorem and a gap
result of Gursky and Lebrun (see Theorem 1 in \cite{[GL]}), we also get $W^{\pm}\equiv0$ and hence
$(M^4,g, f)$ is also isometric to a finite quotient of the round sphere or the complex projective space.
\end{proof}

\bibliographystyle{amsplain}

\begin{thebibliography}{30}

\bibitem{[Au]} T. Aubin, Equations diff\'erentielles non lin\'eaires et probl$\mathrm{\grave{e}}$me de
Yamabe concernant la courbure scalaire, J. Math. Pures Appl. 55, (1976) 269-296.

\bibitem{[Be]} A. Besse, Einstein Manifolds, Springer-Verlag, Berlin-Heidelberg, 1987.

\bibitem{[CLY]} B. Chow, P. Lu, B. Yang, Lower bounds for the scalar curvatures of noncompact gradient
Ricci solitons, C. R. Math. Acad. Sci. Paris 349 (23-24) (2011) 1265-1267.

\bibitem{[Cao]} H.-D. Cao, Geometry of complete gradient shrinking Ricci solitons, Geometry and analysis.
No. 1, 227-246, Adv. Lect. Math. (ALM), 17, Int. Press, Somerville, MA, 2011.

\bibitem{[CCZ]} H.-D. Cao, B.-L. Chen, X.-P. Zhu, Recent developments on Hamilton's Ricci flow,
Surveys in differential geometry. Vol. XII. Geometric flows, Surv. Differ. Geom., vol. 12,
Int. Press, Somerville, MA, 2008, pp. 47-112.

\bibitem{[CaCh]} H.-D. Cao, Q. Chen, On Bach-flat gradient shrinking Ricci solitons, Duke Math. J. 162 (2013), 1149-1169.

\bibitem{[CaZh]} H.-D. Cao, D. Zhou, On complete gradient shrinking Ricci solitons,
J. Diff. Geom. 85 (2010), 175-186.

\bibitem{[CaTr]} X.-D. Cao, H. Tran, The Weyl tensor of gradient Ricci solitons, Geom. Topol. 20
(2016), 389-436.

\bibitem{[CaNi]} J. Carrillo, L. Ni, Sharp logarithmic Sobolev inequalities on gradient solitons
and applications, Comm. Anal. Geom. 17 (2009), 721-753.

\bibitem{[Cati]} G. Catino, Integral pinched shrinking Ricci solitons, Adv. Math. 303 (2016), 279-294.

\bibitem{[CaMa]} G. Catino, P. Mastrolia, Bochner-type formulas for the Weyl tensor on four-dimensional Einstein manifolds,
 Int. Math. Res. Not. IMRN 12 (2020), 3794-3823.

\bibitem{[CGY]}S.-Y.A. Chang, M.J. Gursky, P.C. Yang, A conformally invariant sphere theorem in four dimensions, Publ. Math. Inst. Hautes \'Etudes Sci. 98 (2003), 105-143.

\bibitem{[CT]}J. Cheeger, G. Tian, Curvature and injectivity radius estimates for Einstein $4$-manifolds,
J. Amer. Math. Soc. 19 (2005), 487-525.

\bibitem{[Chen]} B.-L. Chen, Strong uniqueness of the Ricci flow, J. Diff. Geom. 82 (2009), 363-382.

\bibitem{[ChWa]} X.-X. Chen, Y.-Q. Wang, On four-dimensional anti-self-dual gradient Ricci solitons, J. Geom. Anal. 25(2), (2015),  1335-1343.

\bibitem{[Cw]} M. Cwikel, Weak type estimates for singular values and the number of bound states of Schr\"odinger
operators, Ann. Math. 2 (1977), 93-100.

\bibitem{[Dav]} E.B. Davies, Heat kernels and spectral theory, Cambridge Tracts in Mathematics, 92, Cambridge
University Press, Cambridge, 1989.

\bibitem{[De]}A. Derdzi\'nski, A Myers-type theorem and compact Ricci solitons, Proc. Amer. Math. Soc.
134, (2006), 3645-3648.

\bibitem{[ELM]} M. Eminenti, G. La Nave, C. Mantegazza, Ricci solitons: the equation point of view, Manuscr. Math. 127(3), (2008), 345-367.

\bibitem{[EMT]} J. Enders, R. M\"uller, P. Topping, On Type-I singularities in Ricci flow, Comm. Anal. Geom. 19(2011), 905-922.

\bibitem{[FX]} H.-P. Fu, L.-Q. Xiao, Rigidity theorem for integral pinched shrinking Ricci solitons,
 Monatsh. Math. 183 (2017), 487-494.

\bibitem{[GeJi]} H.-B. Ge, W.-S. Jiang, $\epsilon$-regularity for shrinking Ricci solitons and Ricci flows,
Geom. Funct. Anal. 27 (2017), 1231-1256.

\bibitem{[Gr]} A. Grigor'yan, Heat kernel and analysis on manifolds, AMS/IP Studies in Advanced Mathematics, Vol. 47, 2009.

\bibitem{[Gu94]} M.J. Gursky, Locally conformally flat four- and six-manifolds of positive scalar curvature and positive Euler characteristic, Indiana Univ. Math. J. 43 (1994): 747-774.

\bibitem{[Gu]} M.J. Gursky, Four-manifolds with $\delta W^{+}=0$ and Einstein constants of the sphere, Math.
Ann. 318 (2000), 417-431.

\bibitem{[GL]} M.J. Gursky, C. Lebrun, On Einstein manifolds of positive sectional curvature, Ann. Global Anal. Geom. 17 (4) (1999), 315-328.

\bibitem{[Ha88]} R. Hamilton, The Ricci flow on surfaces, Contemporary Mathematics 71 (1988), 237-261.

\bibitem{[Ham]} R. Hamilton, The Formation of Singularities in the Ricci Flow. Surveys in Differential Geometry,
International Press, Boston, vol. 2, (1995), 7-136.

\bibitem{[HaMu]} R. Haslhofer, R. M\"uller, A compactness theorem for complete Ricci
shrinkers, Geom. Funct. Anal. 21 (2011), 1091-1116.

\bibitem{[HaMu2]} R. Haslhofer, R. M\"uller, A note on the compactness theorem for 4d Ricci shrinkers,
Proc. Amer. Math. Soc. 143 (2015), 4433-4437.

\bibitem{[He]}E. Hebey, Sobolev Spaces on Riemannian Manifolds, Lecture Notes in Mathematics, vol. 1635, Springer-Verlag, Berlin, 1996.

\bibitem{[HV]} E. Hebey, M. Vaugon, Effective $L_p$ pinching for the concircular curvature, J. Geom. Anal. 6 (1996),
531-553.

\bibitem{[Hua]} Shaosai Huang, $\epsilon$-regularity and structure of four-dimensional shrinking Ricci solitons,
Int. Math. Res. Not. IMRN 2020, no. 5, 1511-1574.

\bibitem{[Il]}S. Ilias, Constantes explicites pour les in\'egalit\'es de Sobolev sur les vari\'et\'es riemanniennes
compactes, Ann. Inst. Fourier (Grenoble) 33:2 (1983), 151-165.

\bibitem{[Ivey]} T. Ivey, Ricci solitons on compact three-manifolds, Diff. Geom. Appl. 3 (1993), 301-307.

\bibitem{[LiY]} P. Li, S-T. Yau, On the Schr\"odinger equation and the eigenvalue problem, Comm. Math.
Phys. 88 (1983), 309-318.

\bibitem{[LLW]} H.-Z. Li, Y. Li, B. Wang, On the structure of Ricci shrinkers, J. Funct. Anal., 
280 (2021), no. 9, Paper No. 108955, 75 pp.

\bibitem{[LiW9]} Y. Li, B. Wang, The rigidity of Ricci shrinkers of dimension four, Trans. Amer.
Math. Soc. 371 (2019), 6949-6972.

\bibitem{[LiWa]} Y. Li, B. Wang, Heat kernel on Ricci shrinkers, Calc. Var. PDEs,
59 (2020), no. 6, Paper No. 194, 84 pp.

\bibitem{[Lie]} E.H. Lieb, Bounds on the eigenvalues of the Laplace and Schr\"odinger operators, Bull. Am. Math. Soc.
82 (1976), 751-753.

\bibitem{[Mun]} O. Munteanu, The volume growth of complete gradient shrinking Ricci solitons, arXiv:0904.0798v2.

\bibitem{[MuW14]} O. Munteanu, J.-P. Wang, Geometry of manifolds with densities, Adv. Math. 259 (2014), 269-305.

\bibitem{[MuW17]} O. Munteanu, J.-P. Wang, Positively curved shrinking Ricci solitons are compact, J. Diff.
Geom. 106 (2017),  499-505.

\bibitem{[NW]} L. Ni, N. Wallach, On a classification of gradient shrinking solitons, Math. Res. Lett.
15 (2008), 941-955.

\bibitem{[OP]}E.M. Ouhabaz,  C. Poupaud, Remarks on the Cwikel-Lieb-Rozenblum and Lieb-Thirring estimates
for Schr\"odinger operators on Riemannian manifolds, Acta Appl. Math. 110 (2010), 1449-1459.

\bibitem{[Pe]} G. Perelman, The entropy formula for the Ricci flow and its geometric applications,
(2002), arXiv:math.DG/0211159.

\bibitem{[PW]} P. Petersen, W. Wylie, On the classification of gradient Ricci solitons, Geom. Topol. 14 (2010),
 2277-2300.

\bibitem{[PiRS]} S. Pigola, M. Rimoldi, A.G. Setti, Remarks on non-compact gradient Ricci solitons, Math. Z. 268
(2011), 777-790.

\bibitem{[Ro]} G.V. Rozenblum, The distribution of the discrete spectrum for singular differential operators. Sov.
Math., Dokl. 13 (1972), 245-249; translation from Dokl. Akad. Nauk SSSR 202 (1972), 1012-1015.

\bibitem{[Sa]} L. Saloff-Coste, Sobolev inequalities in familiar and unfamiliar settings, in Sobolev spaces
in mathematics, Vol I, 299-343, Int. Math. Ser. (N. Y.), 8, Springer, New York, 2009.

\bibitem{[Si]} M. Singer, Positive Einstein metrics with small $L^{n/2}$-norm of the Weyl tensor,
 Differ. Geom. Appl. 2 (1992), 269-274.

\bibitem{[Tr]} H. Tran, On closed manifolds with harmonic Weyl curvature, Adv. Math. 322 (2017), 861-891.

\bibitem{[W15]}J.-Y. Wu, Elliptic gradient estimates for a weighted heat equation and applications, Math. Z. 280 (2015), 451-468.

\bibitem{[W20b]} J.-Y. Wu, Sharp upper diameter bounds for compact shrinking Ricci solitons, Ann. Global Anal. Geom., 60 (2021), 19-32.
    
\bibitem{[W19]}J.-Y. Wu, Time analyticity for heat quation on gradient shrinking Ricci solitons,
 Acta Math. Sci. Ser. B (Engl. Ed.), 42 (2022), 1690-1700.

\bibitem{[W20]}J.-Y. Wu, Sharp Gaussian upper bounds for Schr\"odinger heat kernel on gradient shrinking Ricci solitons, Manuscripta Math., 172 (2023), 1109-1132.

\bibitem{[WW15]}J.-Y. Wu, P. Wu, Heat kernel on smooth metric measure spaces with nonnegative curvature, Math.
Ann., 362 (2015), 717-742.

\bibitem{[WW16]} J.-Y. Wu, P. Wu, Heat kernel on smooth metric measure spaces and applications, Math. Ann. 365
(2016), 309-344.

\bibitem{[WW19]}J.-Y. Wu, P. Wu, Harmonic and Schrodinger functions of polynomial growth on gradient shrinking Ricci solitons, Geom. Dedicata, 217 (2023), no. 4, Paper No. 75, 25 pp.

\bibitem{[WWW]} J.-Y. Wu, P. Wu, W. Wylie, Gradient shrinking Ricci solitons of half harmonic Weyl curvature,
Calc. Var. PDEs, 57 (5) (2018), Art. 141, 15 pp.

\bibitem{[Wp]}P. Wu, A Weitzenbock formula for canonical metrics on four-manifolds, Trans. Amer. Math. Soc. 369 (2017), 1079-1096.

\bibitem{[Wy]} W. Wylie, Complete shrinking Ricci solitons have finite fundamental group, Proc. Amer. Math. Soc. 136 (2008),
1803-1806.

\bibitem{[Zhq]} Q.S. Zhang, Sobolev inequalities, heat kernels under Ricci flow, and the Poincar\'e conjecture, CRC Press, Boca Raton, FL, 2011.

\bibitem{[Zhz]} Z.-H. Zhang, A gap theorem of four-dimensional gradient shrinking solitons, Comm. Anal. Geom. 28 (2020),  729-742.
\end{thebibliography}

\end{document}